\newcolumntype{L}[1]{>{\raggedright\let\newline\\\arraybackslash\hspace{0pt}}m{#1}}
\newcolumntype{C}[1]{>{\centering\let\newline\\\arraybackslash\hspace{0pt}}m{#1}}
\newcolumntype{R}[1]{>{\raggedleft\let\newline\\\arraybackslash\hspace{0pt}}m{#1}}
\newtheorem{Theorem}{Theorem}[section]
\newtheorem{Proposition}[Theorem]{Proposition}
\newtheorem{Remark}[Theorem]{Remark}
\newtheorem{Lemma}[Theorem]{Lemma}
\newtheorem{Corollary}[Theorem]{Corollary}
\newtheorem{Definition}[Theorem]{Definition}
\newtheorem{Example}[Theorem]{Example}
\let\expandafter\oldproof\csname\string\proof\endcsname
\let\oldendproof\endproof
\renewenvironment{proof}[1][\proofname]{
\oldproof[\ttfamily\scshape \bf #1.]
}{\oldendproof}
\def\ve{\varepsilon}
\def\dom{{\rm dom}\,}
\def\epi{{\rm epi\,}}
\def\rge{{\rm rge\,}}
\def\ox{\overline{x}}
\def\tto{\rightrightarrows}
\def\Bar{\overline}
\def\ra{\rangle}
\def\la{\langle}
\def\ve{\varepsilon}
\def\epsilon{\varepsilon}
\def\ox{\bar{x}}
\def\ov{\bar{v}}
\def\co{\mbox{\rm co}\,}
\def\cone{\mbox{\rm cone}\,}
\def\rri{\rightrightarrows}
\def\gph{\mbox{\rm gph}\,}
\def\epi{\mbox{\rm epi}\,}
\def\dom{\mbox{\rm dom}\,}
\def\co{\mbox{\rm co}\,}
\def\argmin{\mathop{{\rm argmin}}}
\def\O{\Omega}
\def\st{\stackrel}
\def\oR{\Bar{\R}}
\def\lm{\lambda}
\def \N{{\rm I\!N}}
\def \R{{\rm I\!R}}
\newcommand{\bnab}{\overline{\nabla}}
\def\Limsup{\mathop{{\rm Lim}\,{\rm sup}}}
\def\Liminf{\mathop{{\rm Lim}\,{\rm inf}}}
\def\Lim{\mathop{{\rm Lim}}}
\def\Limsup{\mathop{{\rm Lim}\,{\rm sup}}}
\numberwithin{equation}{section}
\def\eliminf{\mathop{{\rm e} \text{-}\,{\rm liminf}}}
\def\elimsup{\mathop{{\rm e} \text{-}\,{\rm limsup}}}
\def\elim{\mathop{{\rm e} \text{-}\,{\rm lim}}}
\def\qua{\mathrm{quad}}
\numberwithin{equation}{section}
\title{\bf
Characterizations of Variational Convexity and Tilt Stability via Quadratic Bundles}
\author{Pham Duy Khanh\footnote{Group of Analysis and Applied Mathematics, Department of Mathematics, Ho Chi Minh City University of Education, Ho Chi Minh City, Vietnam. E-mail: khanhpd@hcmue.edu.vn. } \quad Boris S. Mordukhovich\footnote{Department of Mathematics, Wayne State University, Detroit, Michigan, USA. E-mail: aa1086@wayne.edu. Research of this author was partly supported by the US National Science Foundation under grant DMS-2204519, by the Australian Research Council under Discovery Project DP-190100555, and by Project 111 of China under grant D21024.}\quad Vo Thanh Phat\footnote{Department of Mathematics  and Statistics, University of North Dakota, Grand Forks, North Dakota, USA. E-mail: thanh.vo.1@und.edu.} \quad Le Duc Viet\footnote{Department of Mathematics, Wayne State University, Detroit, Michigan, USA. E-mail: vietle@wayne.edu. Research of this author was partly supported by the US National Science Foundation under grant DMS-2204519.}}
\begin{document}
\maketitle
\vspace*{-0.2in}
\begin{center}
{\bf Dedicated to the memory of Hedy Attouch,\\ an outstanding mathematician and very nice human being}
\end{center}

\noindent
{\small{\bf Abstract}. In this paper, we establish characterizations of variational $s$-convexity and tilt stability for prox-regular functions in the absence of subdifferential continuity via quadratic bundles, a kind of primal-dual generalized second-order derivatives recently introduced by Rockafellar. Deriving such characterizations in the effective pointbased form requires a certain revision of quadratic bundles investigated below. Our device is based on the notion of generalized twice differentiability and its novel characterization via classical twice differentiability of the associated Moreau envelopes combined with various limiting procedures for functions and sets.\\[1ex]
{\bf Key words}. Set-valued and variational analysis, prox-regularity, variational convexity, tilt stability, epi-convergence, subderivatives, generalized twice differentiability, quadratic bundles, Moreau envelopes\\[1ex]
{\bf Mathematics Subject Classification (2020)}} 49J52, 49J53, 90C31

\section{Introduction}\label{intro}
\vspace*{-0.1in}

In the recent years, the notion of {\em $($strong$)$ variational convexity} of extended-real-valued functions whose subdifferential graphs are not distinguishable from those for locally (strongly) convex functions, emerged as an attempt to study local optimality and numerical algorithms of the proximal and augmented Lagrangian types; see the series of Rockafellar's papers \cite{varconv,roc,roc2,varprox}, which are mainly based on {\em second-order subderivative} constructions of the {\em primal type} in finite-dimensional spaces. Another {\em dual-type} approach to these fundamental issues, in both finite and infinite dimensions, was developed in \cite{KKMP23-2,var2order,varsuff1,Mordukhovich24} with applications \cite{kmp24} to generalized Newtonian methods for solving problems of nonsmooth nonconvex optimization and machine learning. The latter approach relays on the usage of coderivative-based {\em second-order subdifferentials} (or generalized Hessians) introduced by Mordukhovich in \cite{mord-2nd} and then largely developed in the literature over the years; see the new book \cite{Mordukhovich24} and the vast bibliographies therein for numerous theoretical and computational results involving such dual-type constructions with various applications to optimization, control, variational stability, and practical modeling. 

Quite recently \cite{roc}, Rockafellar suggested yet another, {\em primal-dual} approach to variational convexity and its strong counterpart based on the newly introduced notion of {\em quadratic bundles} for convex\footnote{When the results of our study in this paper and in the preceding one \cite{gtdquad} have been obtained, we got familiar with the very fresh Rockafellar's manuscript \cite{roc24} containing extensions of quadratic bundles to nonconvex functions and their applications to some issues discussed in \cite{gtdquad} and in the current paper; see more comments below.} extended-real-valued functions. This approach is associated with the concept of {\em epigraphical convergence} of sequences of functions and sets, which was originated by Wijsman in \cite{wij64} for convex functions and then largely grew outside of convexity. The fundamental contributions by Hedy Attouch into the study and applications of epi-convergence are difficult to overstate; see his monograph \cite{attouch84} as well as the book by Rockafellar and Wets \cite{Rockafellar98} with the references therein. 

The notion of quadratic bundles is closely related to {\em generalized twice differentiability} of extended-real-valued functions defined in \cite{roc} and then systematically studied in our preceding work \cite{gtdquad}, where this notion was equivalently characterized, for the general class of prox-regular functions, via the classical twice differentiability of the associated {\em Moreau envelopes}. The obtained characterization played a crucial role in \cite{gtdquad} for our investigation of generalized twice differentiability of prox-regular function and the modified version of quadratic bundles without subdifferential continuity. In particular, we established in \cite{gtdquad} the {\em nonemptiness} of quadratic bundles for the general class of prox-regular functions.

The main goal of this paper is to provide {\em pointbased primal-dual} characterizations of the notion of {\em variational $s$-convexity} of prox-regular functions via quadratic bundles in the {\em absence of subdifferential continuity}. In this way, we also obtain new pointbased primal-dual characterizations of {\em tilt-stable} local minimizers for such functions with quantitative {\em modulus relationships}. An important role in our analysis is played the newly established connections between the {\em quadratic bundle} of the function in question and the {\em Hessian bundle} of the associated Moreau envelope.

The rest of the paper is structured as follows. Section~\ref{sec:prel} equips the reader with some basic constructions and facts of variational analysis and generalized differentiation for sets, extended-real-valued functions and set-valued mappings.  Section~\ref{sec:recap} deals with generalized twice differentiability and quadratic bundles of functions. Section~\ref{sec:charvar} presents primal-dual characterization of variational $s$-convexity, while Section~\ref{sec:tilt} addresses tilt stability of local minimizers. The concluding Section~\ref{sec:conclusion} summarizes the main achievements of the paper and discusses some directions of our future research.

\section{Preliminaries from Variational Analysis}\label{sec:prel}

The setting of this paper is the $n$-dimensional space $\R^n$ equipped (unless otherwise stated) with the  Euclidean norm $\|\cdot\|$ and the inner product $\la \cdot, \cdot \ra$. We signify by $B(\bar{x},r)$ the open ball centered at $\bar{x}$ with radius $r > 0$ and write $x_k \xrightarrow{C} \bar{x}$ when the sequence $\{x_k\}\subset C$ converges to $\bar{x}$. For an extended-real-valued function $f: \R^n \to \R \cup \{\infty\} := \overline{\R}$, the \textit{domain} of $f$ and \textit{epigraph} of $f$ are defined, respectively, by
$$
\dom f := \big\{x \in \R^n\;\big|\; f(x) <\infty\big\}\;\mbox{ and }\;\epi f := \big\{(x,\alpha) \in \R^n \times \R\;\big|\;\alpha \ge f(x)\big\}.
$$
It is said that $f$ is \textit{proper} if $\dom f\ne\emptyset$ and $f$ is {\em lower semicontinuous} (l.s.c.) at $\bar{x}$ if $\liminf\limits_{x \to \bar{x}}  f(x) \ge f(\bar{x})$. The \textit{convex hull} and the \textit{conic convex conic hull} of a set $C\subset\R^n$ are denoted by $\co C$ and $\cone C$, respectively. The \textit{indicator function} of $C$ is defined by $\delta_C(x):=0$ for $x\in C$ and  $\delta_C(x):=\infty$ otherwise.

For a set-valued mapping/multifunction $F: \R^n \rightrightarrows \R^m$, the \textit{domain} and \textit{graph} of $F$ are
$$
\dom F :=\big\{x \in \R^n\;\big|\;F(x)\ne\emptyset\big\}\;\mbox{ and }\;\gph F :=\big\{(x,y) \in \R^n \times \R^m\;\big|\; y \in F(x)\big\}. 
$$
Given $f: \R^n \to \overline{\R}$, the (Fr\'echet) {\em regular subdifferential} of $f$ at $\bar{x}\in\dom f$ is defined by
$$
\widehat{\partial}f (\bar{x}) := \Big\{ v \in \R^n\;\Big|\; \liminf\limits_{x \to \bar{x}} \frac{f(x) - f(\bar{x}) - \la v, x - \bar{x} \ra}{\|x - \bar{x}\|} \ge 0\Big\} 
$$
and the (Mordukhovich) \textit{limiting subdifferential} of $f$ at $\bar{x}$ is 
\begin{equation}\label{eq:subdiff}
\partial f(\bar{x}) := \Limsup\limits_{x \xrightarrow{f} \bar{x}} \widehat{\partial} f(\bar{x}),
\end{equation}
where $\Limsup\limits_{x \to \bar{x}} F(x)$ stands for the (Painlev\'e-Kuratowski) {\em outer limit} of 
$F: \R^n \rri \R^m$ at $\bar{x}$ defined by
$$
\Limsup\limits_{x \to \bar{x}} F(x) :=\big\{ y \in \R^m\;\big|\;\exists\,x_k \to \bar{x},\;y_k \to y\;\mbox{ with }\;y_k \in F(x_k)\;\mbox{ as }\;k \in \N:=\{1,2,\ldots\}\Big\}.
$$
When $f$ is convex, both sets $\widehat{\partial} f$ and $\partial f$ agree with the subdifferential of convex analysis
$$
\partial f(\bar{x}) =\big\{ v \in \R^n\;\big|\;\la v, x - \bar{x} \ra \le f(x) - f(\bar{x})\;\mbox{ whenever }\;x \in \R^n\big\}. 
$$
When $f$ is nonconvex, the set $\widehat\partial f(\ox)$ is still convex, while $\partial f(\ox)$ is not so, even for simple functions as, e.g., $f(x)=-|x|$, where $\widehat\partial f(0)=\emptyset$ while $\partial f(0)=\{-1,1\}$. Nevertheless, $\partial f$ (but not $\widehat\partial f$) enjoys {\em full calculus} in both finite and infinite dimensions, which is based on {\em extremal/variational principles} of variational analysis; see the books \cite{Mordukhovich06,Mordukhovich18,Mordukhovich24,Rockafellar98,thibault} and the references therein for more details.\vspace*{0.05in} 

Next we briefly review the notions of \textit{epi-convergence} for sequences of extended-real-valued functions as well as of {\em second-order subderivatives} for such functions. More details on these and related notions of variational analysis can be found in \cite{Rockafellar98}. To begin with, recall that the {\em outer limit} and {\em inner limit} of a set sequence $\{C_k\}\subset\R^n$ are defined, respectively, by
$$
\Limsup\limits_{k \to \infty} C_k =\big\{ x\;\big|\;\exists\,\{x_{k_j}\}\subset\{x_k\}\;\mbox{ with }\;x_{k_j}\to x,\;x_k\in C_k\big\},\quad\Liminf\limits_{k \to \infty} C_k =\big\{ x\;\big|\;\exists\,x_k\st{C_k}{\to}x\big\}.
$$
If $\Limsup\limits_{x \to \infty} C_k = \Liminf\limits_{x \to \infty} C_k=:C$, then $\{C_k\}$ {\em converges} to $C=\Lim\limits_{x \to \infty}C_k$, which is written as $C_k \to C$.

For a sequence of functions $f_k : \R^n \to \overline{\R}$, the \textit{lower epigraphical limit} and the \textit{upper epigraphical limit} are defined respectively via their epigraphs as follows:
$$ \epi\Big( \eliminf\limits_{k \to \infty} f_k\Big) = \Limsup\limits_{k \to \infty} (\epi f_k),\quad\epi\Big( \elimsup\limits_{k \to \infty} f_k \Big) = \Liminf\limits_{k \to \infty} (\epi f_k).
$$
When these two limits agree, i.e., the sequence $\{\epi f_k\}$ converges to $\epi f$ for a function $f: \R^n \to \overline{\R}$, we say that $\{f_k\}$ \textit{epigraphically converges} to $f$, which is denoted by either $\elim\limits_{k \to \infty} f_k = f$, or $f_k \xrightarrow{e} f$. 

Alternative descriptions of the lower and upper epigraphical limits of functions are given in the following assertion taken from \cite[Proposition~7.2]{Rockafellar98}.

\begin{Proposition}\label{prop:epichar} Let $\{f_k\}$ be a sequence of extended-real-valued functions on $\R^n$, and let $x\in\R^n$. Then:
$$
\Big(\eliminf\limits_{k \to \infty} f_k\Big)(x) = \min\big\{ \alpha \in \overline{\mathbb{\R}}\;\big|\;\exists x_k \to x,\;\liminf f_k(x_k) = \alpha\big\}, 
$$
$$
\Big(\elimsup\limits_{k \to \infty} f_k\Big)(x) = \min\big\{ \alpha \in \overline{\mathbb{\R}}\;\big|\;\exists\,x_k \to x,\,\limsup f_k(x_k) = \alpha\big\}.
$$
Therefore, $\{f_k\}$ epigraphically converges to $f$ if and only if for each point $x \in \R^n$, we have
$$\left\{ \begin{aligned}
&\liminf f_k(x_k) \ge f(x)\;\text{ for every sequence }\;x_k \to x,\\
&\limsup f_k(x_k) \le f(x)\;\text{ for some sequence }\;x_k \to x.
\end{aligned} \right.
$$
In particular, if $\{f_k\}$ epigraphically converges to $f$, then for all $x\in\R^n$ there exists a sequence $x_k \to x$ such that $f_k(x_k) \to f(x)$ as $k\to\infty$.
\end{Proposition}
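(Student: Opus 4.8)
The plan is to reduce both formulas to the defining relations $\epi\big(\eliminf_k f_k\big)=\Limsup_k(\epi f_k)$ and $\epi\big(\elimsup_k f_k\big)=\Liminf_k(\epi f_k)$, exploiting the elementary fact that Painlev\'e--Kuratowski outer and inner limits of set sequences are always closed. Consequently $g:=\eliminf_k f_k$ and $h:=\elimsup_k f_k$ are l.s.c.\ functions, and for every $x$ we have $g(x)=\inf\{\alpha\in\R\mid(x,\alpha)\in\Limsup_k(\epi f_k)\}$ and $h(x)=\inf\{\alpha\in\R\mid(x,\alpha)\in\Liminf_k(\epi f_k)\}$, each infimum being attained whenever it is finite (again by closedness of the limiting epigraph). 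The whole statement will then be a dictionary between these set limits and function values, the two pointwise formulas being the substantive part and the epi-convergence criterion a corollary.

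First I would prove the formula for $g$. Unwinding the outer limit, $(x,\alpha)\in\epi g$ is equivalent to the existence of a subsequence $\{k_j\}$ and points $(x_{k_j},\alpha_{k_j})\in\epi f_{k_j}$, i.e.\ $f_{k_j}(x_{k_j})\le\alpha_{k_j}$, with $x_{k_j}\to x$ and $\alpha_{k_j}\to\alpha$. To see that $g(x)$ is a lower bound for the right-hand set, I take any $x_k\to x$, set $\alpha:=\liminf_k f_k(x_k)$, pass to a subsequence along which $f_{k_j}(x_{k_j})\to\alpha$, and (treating $\alpha=+\infty$ trivially and $\alpha=-\infty$ by passing to an auxiliary constant level and letting it tend to $-\infty$) conclude $(x,\alpha)\in\epi g$, hence $g(x)\le\alpha$. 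To see that $g(x)$ itself lies in the set, I start from $(x,g(x))\in\epi g$ — valid by closedness when $g(x)\in\R$, by a diagonal extraction over levels when $g(x)=-\infty$, and trivially when $g(x)=+\infty$ — extract $x_{k_j}\to x$ with $f_{k_j}(x_{k_j})\le\alpha_{k_j}\to g(x)$, and \emph{complete} this to a full sequence by setting $x_k:=x$ for the indices $k$ outside $\{k_j\}$. Since the liminf of a sequence is dominated by that of any subsequence, $\liminf_k f_k(x_k)\le g(x)$, which together with the lower-bound step gives $\liminf_k f_k(x_k)=g(x)$, so $g(x)$ is the minimum of the right-hand set.

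The argument for $h$ is dual: now $(x,\alpha)\in\epi h$ means there is a full sequence $(x_k,\alpha_k)\in\epi f_k$ with $(x_k,\alpha_k)\to(x,\alpha)$. That $h(x)$ is a lower bound: given $x_k\to x$ with $\limsup_k f_k(x_k)=\alpha<\infty$, I put $\alpha_k:=\sup_{j\ge k}\max\{f_j(x_j),\alpha\}$, which is finite for large $k$, nonincreasing, converges to $\alpha$, and dominates $f_k(x_k)$, so $(x,\alpha)\in\epi h$ and $h(x)\le\alpha$ (the case $\alpha=-\infty$ is analogous). That $h(x)$ lies in the set: a full sequence $(x_k,\alpha_k)\in\epi f_k$ with $(x_k,\alpha_k)\to(x,h(x))$ obeys $\limsup_k f_k(x_k)\le\lim_k\alpha_k=h(x)$. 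Then the epi-convergence criterion and its ``in particular'' consequence follow from the two formulas: the first listed condition ($\liminf_k f_k(x_k)\ge f(x)$ for all $x_k\to x$) is exactly $\eliminf_k f_k(x)\ge f(x)$; the second ($\limsup_k f_k(x_k)\le f(x)$ for some $x_k\to x$) gives $\elimsup_k f_k(x)\le f(x)$; since $\eliminf_k f_k\le\elimsup_k f_k$ pointwise, the two together force $\eliminf_k f_k=\elimsup_k f_k=f$, and conversely these equalities reproduce the two conditions (the lower-bound half of the first formula, the attainment half of the second). Lastly, the sequence supplied by the second condition satisfies $f(x)\le\liminf_k f_k(x_k)\le\limsup_k f_k(x_k)\le f(x)$, i.e.\ $f_k(x_k)\to f(x)$.

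The step I expect to be the genuine obstacle is bookkeeping rather than conceptual: systematically carrying the values $\pm\infty$ through the outer and inner set limits — in particular the diagonal extraction over levels needed when the limiting value is $-\infty$, and the explicit construction of the dominating reals $\alpha_k$ in the $\elimsup$ estimate — and keeping straight the asymmetry between the ``for some subsequence'' hidden in the outer limit and the ``for every sequence'' appearing in the first condition of the epi-convergence criterion. Everything else is a direct translation between epigraphs and function values.
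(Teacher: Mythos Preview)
The paper does not prove this proposition; it simply records it as ``taken from \cite[Proposition~7.2]{Rockafellar98}'' and uses it as a black box. Your proof is correct and is essentially the standard argument that underlies the cited result in Rockafellar--Wets: unwind the set-theoretic definitions of $\Limsup_k(\epi f_k)$ and $\Liminf_k(\epi f_k)$, exploit closedness of Painlev\'e--Kuratowski limits to get attainment, and translate back to function values. The one technical point you flag --- carrying $\pm\infty$ through via level-truncation and diagonal extraction, and the subsequence-completion trick (filling in $x_k:=x$ off the extracted indices) to pass from the outer-limit subsequence to a full sequence realizing the liminf --- is exactly the substantive content, and you handle it correctly. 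There is nothing to compare against here beyond noting that your write-up supplies what the paper omits.
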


Using the notion of epi-convergence enables us to define, following Rockafellar \cite{2sd2}, the second-order subderivatives for extended-real-valued functions. Given $f\colon\R^n\to\oR$, $\ox\in\dom f$, $\ov\in\R^n$, and $t>0$, consider the \textit{second-order difference quotient} 
\begin{equation*}
\Delta_t^2 f(\bar{x}|\bar{v})(w) := \frac{f(\bar{x} + t w) - f(\bar{x}) - t \la \bar{v}, w \ra}{\frac{1}{2}t^2}, \quad  w \in \R^n,
\end{equation*}
and then define the \textit{second-order subderivative} of $f$ at $\bar{x}$ for $\bar{v}$ by
\begin{equation}\label{eq:d2}
d^2 f(\bar{x}|\bar{v})(w) := \Big(\eliminf\limits_{t \downarrow 0} \Delta_t^2 f(\bar{x}|\bar{v})\Big)(w) = \liminf\limits_{\substack{t \downarrow 0 \\ w' \to w}} \Delta_t^2 f(\bar{x}|\bar{v})(w'), \quad w\in \R^n.
\end{equation}
When the epigraphical lower limit in \eqref{eq:d2} is a full limit, it is said that $f$ is \textit{twice epi-differentiable} at $\bar{x}$ for $\bar{v}$. If in addition $d^2 f(\bar{x}|\bar{v})$ is proper, then $f$ is called to be \textit{properly twice epi-differentiable} at $\bar{x}$ for $\bar{v}$. Examples of nonsmooth twice 
epi-differentiable functions include, in particular, convex piecewise linear-quadratic functions \cite{Rockafellar98} and---much more general---{\em parabolically regular} functions as shown in \cite{modeb1,sarabi}. However, a similar formula to \eqref{eq:d2} for the upper epigraphical limit of $\Delta_t^2 f(\bar{x}|\bar{v})$ as $t \downarrow 0$ is not available, and we have to rely on Proposition~\ref{prop:epichar} giving us the expression
$$
\Big(\elimsup\limits_{t \downarrow 0} \Delta_t^2 f(\bar{x}|\bar{v})\Big)(w)  = \min\big\{ \alpha \in \overline{\mathbb{\R}}\;\big|\;\exists\,t_k \downarrow 0,\,w_k \to w,\,\limsup \Delta_{t_k}^2 f(\bar{x}|\bar{v})(w_k) = \alpha\big\}
$$
while verifying twice epi-differentiability. In the case of classical twice differentiable functions, second-order subderivatives reduce to quadratic forms associated with the Hessian matrix at $\bar{x}$:
\begin{equation}\label{eq:d2twicediff}
d^2 f(\bar{x}|\nabla f(\bar{x}))(w) = \la w, \nabla^2 f(\bar{x})w \ra, \quad  w \in \R^n. 
\end{equation}
We also need recalling two more interrelated notions, which play a crucial role in deriving our major results. Given a proper l.s.c.\ function $f\colon\R^n\to\oR$ and a parameter $\lm>0$, the \textit{Moreau $\lambda$-envelope} and the associated {\em $\lambda$-proximal mapping} of $f$  are defined by
$$
e_\lambda f(x) := \inf\limits_{u \in \R^n}\Big\{f(u) + \frac{1}{2\lambda}\|u - x\|^2\Big\},\quad
P_\lambda f(x) := \argmin\limits_{u \in \R^n}\Big\{f(u) + \frac{1}{2\lambda}\|u - x\|^2\Big\}.
$$
The function $f\colon\R^n\to\oR$ is called to be {\em prox-bounded} on $\R^n$ if there exists $\lm>0$ such that $e_\lm f(x)>-\infty$ for some $x\in\R^n$, which is equivalent to saying that $f$ is minorized by a quadratic function.\vspace*{0.05in}

The next notion of {\em variational $s$-convexity} for extended-real-valued functions is the main one we intend to characterize in this paper. For the general case of $s\in\R$, it has been very recently introduced by Rockafellar \cite{varprox} and have already been investigated in \cite{gfrerer2024,gtdquad,roc24}, while its previous versions with $s\ge 0$ appeared earlier in \cite{varconv} and then was studied and applied in \cite{var2order,varsuff1,kmp24,roc,roc2} in finite-dimensional spaces and in  \cite{KKMP23-2,Mordukhovich24} in infinite dimensions. The case of $s<0$ relates to {\em prox-regularity}, the notion introduced as early as in \cite{proxreg} and then systematically employed in variational analysis and optimization; see \cite{Rockafellar98,thibault}.

Before formulating the aforementioned variational $s$-convexity notion, recall that $f:\R^n \to \overline{\R}$ is {\em $s$-convex} for $s \in \R$ if  the quadratically $s$-shifted function $f - \frac{s}{2}\|\cdot\|^2$ is convex. When $s > 0$, the function $f$ is called to be \textit{$s$-strongly convex}, and it is \textit{$(-s)$-weakly convex} when $s < 0$. 

\begin{Definition}\label{defi:varconv} {\rm Let $s$ be any real number. A proper l.s.c.\ function $f\colon\R^n \to \overline{\R}$ is {\em variationally $s$-convex} at $\bar{x}$ for $\bar{v} \in \partial f(\bar{x})$ if $f$ is finite at $\bar{x}$ and there exist an $s$-convex function $\widehat{f}$ as well as convex neighborhoods $U$ of $\bar{x}$ and  $V$ of $\bar{v}$ together with $\varepsilon > 0$ such that $\widehat{f} \le f$ on $U$ and 
$$
(U \times V) \cap \gph \partial \widehat{f} = (U_\varepsilon \times V) \cap \gph \partial f, \quad\widehat{f}(x) = f(x)\;\text{ at common elements }\;(x,v), 
$$
where $U_\varepsilon := \{x \in U \mid f(x) < f(\bar{x}) + \varepsilon\}$. Note that we can always select $U := B(\bar{x},\varepsilon)$, $V := B(\bar{v},\varepsilon)$ and then say that $f$ is variationally $s$-convex at $\bar{x}$ for $\bar{v}$ \textit{with corresponding radius} $\varepsilon > 0$.} When $s \le 0$, $f$ is said to be $(-s)$-level {\em prox-regular} at $\bar{x}$ for $\bar{v}$, while for $s = 0$ we say that $f$ is {\em variationally convex}  at $\bar{x}$ for $\bar{v}$. When $s > 0$, $f$ is called to be {\em $s$-strongly variationally convex} at $\bar{x}$ for $\bar{v}$.
\end{Definition}

A useful notion to study variationally $s$-convex functions is the one of $f$-attentive localization of the subgradient set $\partial f$ around a reference point. To be specific, given a function $f: \R^n \to \overline{\R}$, its limiting subdifferential $\partial f$, and a number $\varepsilon > 0$,  the \textit{$f$-attentive $\varepsilon$-localization} $T_\ve\colon\R^n\tto\R^n$ of $\partial f$ around $(\bar{x},\bar{v}) \in \gph \partial f$ is defined via its graph by
$$ 
\gph T_\ve: =\big\{ (x,v) \in \gph \partial f\;\big|\;x \in B(\bar{x},\varepsilon),\,v \in B(\bar{v},\varepsilon),\;|f(x) - f(\bar{x})| < \varepsilon\big\}. 
$$
When $f$ is l.s.c.\ at $\bar{x}$, the above definition can be equivalently replaced by
\begin{equation}\label{eq:fattentive}
\gph T_\ve =\big\{ (x,v) \in \gph \partial f\;\big|\;x\in B(\bar{x},\varepsilon),\;v \in B(\bar{v},\varepsilon),\;f(x) < f(\bar{x}) + \varepsilon\big\}.
\end{equation}
The next result extracted from  \cite[Theorem~13.37]{Rockafellar98} presents an important property of Moreau envelopes and proximal mappings associated with prox-regular and prox-bounded functions.

\begin{Proposition}\label{prop:prox-mor-pr} Let $f: \R^n \to \overline{\R}$ be prox-bounded on $\R^n$ and $r$-level prox-regular at $\bar{x}$ for $\bar{v} \in \partial f(\bar{x})$ with the corresponding radius $\varepsilon > 0$. Then for all $\lambda \in (0,1/r)$, there exists a neighborhood of the shifted point $\bar{x} + \lambda\bar{v}$ on which we have the following properties:
\begin{itemize}
\item[\rm\textbf{(i)}] The proximal mapping $P_\lambda f$ is single-valued and Lipschitz continuous with $P_\lambda f(\bar{x} + \lambda\bar{v}) = \bar{x} + \lambda\bar{v}$.

\item[\rm\textbf{(ii)}] The Moreau envelope $e_\lambda f$ is of class $\mathcal{C}^{1,1}$ with 
\begin{equation}\label{eq:gradenv}
\nabla e_\lambda f = \lambda^{-1}[I - P_\lambda f] = [\lambda I + T^{-1}_\ve]^{-1},
\end{equation}
where $T_\ve$ is the $f$-attentive $\varepsilon$-localization of $\partial f$ at $(\bar{x},\bar{v})$.
\end{itemize}
Furthermore, the multifunction $T_\ve$ in {\rm \textbf{(ii)}} can be chosen so that the set $U_{\lambda} := \rge (I + \lambda T_\varepsilon)$ serves for all $\lambda > 0$ sufficiently small as a neighborhood of $\bar{x} + \lambda\bar{v}$ on which these properties hold.
\end{Proposition}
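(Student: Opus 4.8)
This is the finite-dimensional form of \cite[Theorem~13.37]{Rockafellar98}, and the plan is to run that argument through the $f$-attentive localization $T_\ve$ together with the description of level prox-regularity in Definition~\ref{defi:varconv}. With $s=-r$ there we obtain an $(-r)$-convex function $\widehat f$, sets $U=B(\ox,\ve)$, $V=B(\ov,\ve)$, and $\ve>0$ with $\widehat f\le f$ on $U$, $\widehat f=f$ at common subgradient points, and $\gph T_\ve=(U\times V)\cap\gph\partial\widehat f$. Put $h:=\widehat f+\tfrac r2\|\cdot\|^2$: this is proper l.s.c.\ convex, so $\partial h=\partial\widehat f+rI$ is maximal monotone, and $\gph(T_\ve+rI)\subset\gph\partial h$ with equality near $(\ox,\ov+r\ox)$. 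Since $f\ge\widehat f$ on $U$ and $\ov+r\ox\in\partial h(\ox)$, one also has the quadratic minorant $f(x')\ge f(\ox)+\la\ov,x'-\ox\ra-\tfrac r2\|x'-\ox\|^2$ for $x'\in U$; with $\lambda<1/r$ this makes $\ox$ the strict minimizer of $u\mapsto f(u)+\tfrac1{2\lambda}\|u-(\ox+\lambda\ov)\|^2$ over $U$ and yields $e_\lambda f(\ox+\lambda\ov)=f(\ox)+\tfrac\lambda2\|\ov\|^2$. Prox-boundedness guarantees $P_\lambda f(w)\ne\emp$ for all $w$, and — after a harmless localization of $f$ near $\ox$, which does not change the prox-regularity data — we may assume the proximal points of interest stay in $B(\ox,\ve)$.

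\textbf{Step 1: the resolvent of $I+\lambda T_\ve$.} For $\lambda\in(0,1/r)$ set $\mu:=\lambda/(1-\lambda r)>0$ and use $I+\lambda T_\ve=(1-\lambda r)\,\big[I+\mu(T_\ve+rI)\big]$. Since $T_\ve+rI$ coincides near $(\ox,\ov+r\ox)$ with the maximal monotone $\partial h$, whose resolvent $J_\mu:=(I+\mu\partial h)^{-1}=P_\mu h$ is globally single-valued, $1$-Lipschitz, and continuous by Minty, one reads off $(I+\lambda T_\ve)^{-1}(w)=J_\mu\!\big(w/(1-\lambda r)\big)$ for $w$ near $\ox+\lambda\ov$. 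As $J_\mu$ is continuous, $U_\lambda:=\rge(I+\lambda T_\ve)$ — the domain of this inverse — contains, up to the linear rescaling, a $J_\mu$-preimage of a neighborhood of $\ox$, hence an open neighborhood of $\ox+\lambda\ov$; and there $(I+\lambda T_\ve)^{-1}$ is single-valued and Lipschitz with constant $(1-\lambda r)^{-1}$. The identity $\lambda^{-1}\big(I-(I+\lambda T_\ve)^{-1}\big)=(\lambda I+T_\ve^{-1})^{-1}$ needed in \eqref{eq:gradenv} is elementary: if $z\in(\lambda I+T_\ve^{-1})^{-1}(w)$, then $u:=w-\lambda z$ satisfies $w\in(I+\lambda T_\ve)(u)$, so $u=(I+\lambda T_\ve)^{-1}(w)$ and $z=\lambda^{-1}(w-u)$.

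\textbf{Step 2: $P_\lambda f$ equals the resolvent (the crux).} I would show $P_\lambda f(w)=(I+\lambda T_\ve)^{-1}(w)$ for $w$ in a possibly smaller neighborhood $N$ of $\ox+\lambda\ov$. If $u\in P_\lambda f(w)$, stationarity gives $\lambda^{-1}(w-u)\in\partial f(u)$, so it suffices to place $(u,\lambda^{-1}(w-u))$ into $\gph T_\ve$, i.e.\ to check $u\in B(\ox,\ve)$, $\lambda^{-1}(w-u)\in B(\ov,\ve)$, and $f(u)<f(\ox)+\ve$; then $w\in(I+\lambda T_\ve)(u)$, and since $(I+\lambda T_\ve)^{-1}(w)$ is a singleton while $P_\lambda f(w)\ne\emp$, equality follows. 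That $u$ stays near $\ox$ comes from comparing $f(u)+\tfrac1{2\lambda}\|u-w\|^2\le f(\ox)+\tfrac1{2\lambda}\|\ox-w\|^2$ with the above minorant, which gives $\|u-\ox\|\le 2\|w-(\ox+\lambda\ov)\|/(1-\lambda r)\to0$; the positive margin here is precisely the gain from $\lambda<1/r$. The bound $f(u)<f(\ox)+\ve$ then follows from the near-cancellation $f(u)=e_\lambda f(w)-\tfrac1{2\lambda}\|u-w\|^2\to\big(f(\ox)+\tfrac\lambda2\|\ov\|^2\big)-\tfrac\lambda2\|\ov\|^2=f(\ox)$ as $w\to\ox+\lambda\ov$; this is the delicate point, since without subdifferential continuity the convergence $f(u)\to f(\ox)$ is not automatic. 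In particular $(\ox,\ov)\in\gph T_\ve$ gives $\ox+\lambda\ov\in(I+\lambda T_\ve)(\ox)$, so $P_\lambda f(\ox+\lambda\ov)=\ox$. I expect this confinement/near-cancellation argument to be the main obstacle.

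\textbf{Step 3: $\mathcal{C}^{1,1}$ regularity of $e_\lambda f$, and the last assertion.} On $N$, $P_\lambda f=(I+\lambda T_\ve)^{-1}$ is single-valued and Lipschitz by Steps 1--2. Writing $e_\lambda f(w)=\tfrac1{2\lambda}\|w\|^2+\inf_u\big\{f(u)+\tfrac1{2\lambda}\|u\|^2-\tfrac1\lambda\la u,w\ra\big\}$ exhibits $e_\lambda f-\tfrac1{2\lambda}\|\cdot\|^2$ as an infimum of affine functions, hence concave; such a function is differentiable precisely where its superdifferential is a singleton, and single-valuedness of the attaining set $P_\lambda f(w)$ provides this on $N$, so that $\nabla e_\lambda f=\lambda^{-1}(I-P_\lambda f)$ there. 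Lipschitz continuity of $P_\lambda f$ upgrades this to $e_\lambda f\in\mathcal{C}^{1,1}$ near $\ox+\lambda\ov$, and combined with Step 1 it gives the full chain $\nabla e_\lambda f=\lambda^{-1}[I-P_\lambda f]=[\lambda I+T_\ve^{-1}]^{-1}$ of \eqref{eq:gradenv} with $\nabla e_\lambda f(\ox+\lambda\ov)=\ov$. For the last claim — that $U_\lambda=\rge(I+\lambda T_\ve)$ itself works, for a suitable $T_\ve$, once $\lambda>0$ is small — I would take $T_\ve$ with a slightly smaller radius $\ve'<\ve$ (so the minorant still holds on $B(\ox,\ve)$), and use prox-boundedness to argue that for $\lambda$ small no proximal point of any $w\in\rge(I+\lambda T_{\ve'})$ escapes $B(\ox,\ve)$; then the unique minimizer over $B(\ox,\ve)$ is the global one, whence $P_\lambda f=(I+\lambda T_{\ve'})^{-1}$ holds throughout $U_\lambda$. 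This is where the freedom to shrink $\ve$ and to take $\lambda$ small is used.
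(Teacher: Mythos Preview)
The paper does not prove this proposition: it is stated as ``extracted from \cite[Theorem~13.37]{Rockafellar98}'' and used without proof. Your proposal is a faithful reconstruction of the Rockafellar--Wets argument, carried through the $f$-attentive localization $T_\ve$ and the $(-r)$-convex majorant $\widehat f$ from Definition~\ref{defi:varconv}; the resolvent identity in Step~1, the confinement-plus-cancellation in Step~2, and the concavity-of-the-shift argument in Step~3 are all correct and match the standard route. One incidental observation: your Step~2 correctly yields $P_\lambda f(\ox+\lambda\ov)=\ox$, whereas the proposition as printed has $P_\lambda f(\ox+\lambda\ov)=\ox+\lambda\ov$, which appears to be a typo in the paper.
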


The function-attentive localization of the limiting subdifferential provides a characterization of variational 
$s$-convexity as it is shown in \cite[Theorem~1]{varprox}.

\begin{Proposition}\label{theo:svarchar} Let $f: \R^n \to \overline{\R}$ be l.s.c.\ at $\ox\in\dom f$, let $s\in\R$, and let $\ve>0$. Then  $f$ is variationally $s$-convex at $\bar{x}$ for $\bar{v} \in \partial f(\bar{x})$ if and only if there exist a neighborhood $U$ of $\bar{x}$ and an $f$-attentive $\ve$-localization $T_\ve$ of $\partial f$ around $(\bar{x},\bar{v})$ as in \eqref{eq:fattentive} such that for all $x' \in U$ we have the estimate
\begin{equation}\label{eq:svarconv}
f(x') \ge f(x) + \la v, x' - x \ra + \frac{s}{2}\|x' - x\|^2\;\mbox{ whenever }\;(x,v) \in \gph T_\ve.
\end{equation}
\end{Proposition}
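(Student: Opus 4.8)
The plan is to prove the two implications separately. In both directions the engine is the subdifferential description of $s$-convexity: a proper function $g$ is $s$-convex exactly when $g-\tfrac{s}{2}\|\cdot\|^{2}$ is convex, so that $v\in\partial g(x)$ forces the \emph{global} inequality $g(x')\ge g(x)+\la v,x'-x\ra+\tfrac{s}{2}\|x'-x\|^{2}$ for all $x'\in\R^{n}$. For the harder direction I will, in the spirit of the present paper, pass to Moreau envelopes and use Proposition~\ref{prop:prox-mor-pr}.

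\emph{Necessity.} Suppose $f$ is variationally $s$-convex at $\ox$ for $\ov$, and take the $s$-convex minorant $\widehat f$ together with $U=B(\ox,\ve)$, $V=B(\ov,\ve)$ with $\ve$ a corresponding radius, as permitted by Definition~\ref{defi:varconv}. Comparing the description \eqref{eq:fattentive} of the $f$-attentive $\ve$-localization with the set $U_{\ve}$ shows that $\gph T_{\ve}=(U_{\ve}\times V)\cap\gph\partial f$; that is, $\gph T_{\ve}$ is precisely the set of common elements of $\gph\partial\widehat f$ and $\gph\partial f$ appearing in Definition~\ref{defi:varconv}. Hence each $(x,v)\in\gph T_{\ve}$ has $v\in\partial\widehat f(x)$ and $\widehat f(x)=f(x)$, and combining the global $s$-convex subgradient inequality for $\widehat f$ with $\widehat f\le f$ on $U$ gives, for every $x'\in U$,
\[
f(x')\ \ge\ \widehat f(x')\ \ge\ \widehat f(x)+\la v,x'-x\ra+\tfrac{s}{2}\|x'-x\|^{2}\ =\ f(x)+\la v,x'-x\ra+\tfrac{s}{2}\|x'-x\|^{2},
\]
which is \eqref{eq:svarconv}.

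\emph{Sufficiency.} Assume \eqref{eq:svarconv} holds on a neighborhood $U$ for the $f$-attentive $\ve$-localization $T_{\ve}$; shrink $\ve$ so that $B(\ox,\ve)\subseteq U$, and note $(\ox,\ov)\in\gph T_{\ve}$. Introduce the natural $s$-convex minorant
\[
\widehat f(x'):=\sup_{(x,v)\in\gph T_{\ve}}\Big[f(x)+\la v,x'-x\ra+\tfrac{s}{2}\|x'-x\|^{2}\Big],\qquad x'\in\R^{n}.
\]
Each function in the supremum equals $\tfrac{s}{2}\|\cdot\|^{2}$ plus an affine function, so $\widehat f-\tfrac{s}{2}\|\cdot\|^{2}$ is a supremum of affine functions and thus convex; hence $\widehat f$ is $s$-convex, minorized by a quadratic (so prox-bounded), and proper since $\widehat f(\ox)=f(\ox)$ is finite by \eqref{eq:svarconv}. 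That same estimate gives $\widehat f\le f$ on $U$. For any fixed $(x_{0},v_{0})\in\gph T_{\ve}$, the corresponding quadratic-affine function lies below $\widehat f$ everywhere and agrees with it at $x_{0}$ (by \eqref{eq:svarconv} at $x'=x_{0}$), hence supports $\widehat f$ at $x_{0}$; therefore $v_{0}\in\widehat\partial\widehat f(x_{0})\subseteq\partial\widehat f(x_{0})$ and $\widehat f(x_{0})=f(x_{0})$. This already yields the inclusion $\gph T_{\ve}\subseteq\gph\partial\widehat f$ near $(\ox,\ov)$ and the value agreement of Definition~\ref{defi:varconv}.

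What remains, and is the crux, is the reverse inclusion: near $(\ox,\ov)$ the graph of $\partial\widehat f$ must contain no pair outside $\gph\partial f$. The plan here is to argue via Moreau envelopes. Reading \eqref{eq:svarconv} with $x'\in B(\ox,\ve)\subseteq U$ shows $f$ is $r$-level prox-regular at $\ox$ for $\ov$ with $r=\max\{0,-s\}$, and prox-bounded after a harmless truncation far from $\ox$. Fix $\lambda>0$ small (with $\lambda<1/(-s)$ when $s<0$). For $y$ near $\ox+\lambda\ov$, Proposition~\ref{prop:prox-mor-pr} and \eqref{eq:gradenv} give $x:=P_{\lambda}f(y)\in\dom T_{\ve}$ with $\lambda^{-1}(y-x)\in T_{\ve}(x)$, so by the supporting-function argument above $\lambda^{-1}(y-x)\in\partial\widehat f(x)$ and $\widehat f(x)=f(x)$; since $\widehat f(\cdot)+\tfrac{1}{2\lambda}\|\cdot-y\|^{2}$ is strongly convex, this first-order condition makes $x$ its unique minimizer, so $e_{\lambda}\widehat f(y)=\widehat f(x)+\tfrac{1}{2\lambda}\|x-y\|^{2}=f(x)+\tfrac{1}{2\lambda}\|x-y\|^{2}=e_{\lambda}f(y)$. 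Thus $e_{\lambda}\widehat f=e_{\lambda}f$ on a neighborhood of $\ox+\lambda\ov$, hence $\nabla e_{\lambda}\widehat f=\nabla e_{\lambda}f$ there; applying \eqref{eq:gradenv} to both $f$ and $\widehat f$ and inverting identifies $\gph\partial\widehat f$ with $\gph T_{\ve}\subseteq\gph\partial f$ near $(\ox,\ov)$, which is exactly the missing inclusion. After shrinking $U$ and $V$, all requirements of Definition~\ref{defi:varconv} hold, so $f$ is variationally $s$-convex at $\ox$ for $\ov$. The hard part is precisely this reverse inclusion — ruling out spurious subgradients of the constructed minorant; a monotonicity-based alternative is also available, using that $\partial\widehat f-s\,\mathrm{Id}$ is maximal monotone while \eqref{eq:svarconv} says $\gph(T_{\ve}-s\,\mathrm{Id})$ is monotone and locally maximal relative to the neighborhood, together with a Poliquin--Rockafellar-type localization lemma.
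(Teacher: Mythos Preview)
The paper does not prove this statement; it is quoted as a preliminary result from \cite[Theorem~1]{varprox}, so there is no in-paper argument to compare against.

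On its own merits your proof is essentially correct. The necessity direction is the standard one. For sufficiency, the supremum construction of $\widehat f$ is the natural candidate minorant, and your Moreau-envelope comparison does close the missing reverse inclusion: once $e_\lambda\widehat f=e_\lambda f$ near $\bar z=\bar x+\lambda\bar v$, the global single-valuedness of $(I+\lambda\partial\widehat f)^{-1}$ (from strong convexity of $\widehat f+\tfrac{1}{2\lambda}\|\cdot\|^2$ when $\lambda$ is small) forces every $(x',v')\in\gph\partial\widehat f$ near $(\bar x,\bar v)$ to satisfy $x'=P_\lambda\widehat f(y')=P_\lambda f(y')$ and $v'=\nabla e_\lambda f(y')$ at $y'=x'+\lambda v'$, hence $(x',v')\in\gph T_\ve$ via \eqref{eq:gradenv}. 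One technical point should be spelled out rather than waved at: Proposition~\ref{prop:prox-mor-pr} needs $f$ proper, l.s.c.\ on $\R^n$, and prox-bounded, whereas the hypothesis gives only l.s.c.\ at $\bar x$ and the quadratic lower bound \eqref{eq:svarconv} on $U$; your ``harmless truncation far from $\bar x$'' (for instance replacing $f$ by $f+\delta_{\overline{B(\bar x,\rho)}}$ for a small $\rho$, which does not alter $\partial f$, $T_\ve$, or the local conclusion) is indeed harmless, but it should be written down explicitly rather than left as an aside.
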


A function $f\colon\R^n\to\oR$ is said to be \textit{subdifferentially continuous} at $\bar{x}\in\dom f$ for $\bar{v}\in\partial f(\ox)$ if for all $(x_k,v_k) \xrightarrow{\gph \partial f}(\bar{x},\bar{v})$ we have $f(x_k) \to f(\bar{x})$ as $k\to\infty$. The subdifferentially continuity of $f$ at $\bar{x}$ for $\bar{v}$ allows us to replace the set $\gph T_\ve$ above by the ordinary neighborhood of $(\bar{x},\bar{v})$ on $\gph \partial f$, i.e., the variationally $s$-convexity of $f$ at $\bar{x}$ for $\bar{v}$ holds if there exists $\varepsilon > 0$ such that
$$
f(x') \ge f(x) + \la v, x' - x \ra + \frac{s}{2}\|x' - x\|^2\;\mbox{ for all }\;x' \in B(\bar{x},\varepsilon)\;\mbox{ and }\;(x,v) \in \gph \partial f \cap B((\bar{x},\bar{v}),\varepsilon). 
$$

In what follows, we exploit the useful relationships between the second-order growth condition and second-order subderivative established in our preceding paper \cite[Theorem~3.4]{gtdquad}.

\begin{Proposition}\label{prop:strongchar} Let $f: \R^n \to \overline{\R}$ be a proper l.s.c.\ function. Pick arbitrary vectors $\bar{x} \in\dom f$ and $\bar{v}\in \R^n$ and consider the assertions:
 \begin{itemize}
\item[\rm\textbf{(i)}] There exists a neighborhood $U$ of $\bar{x}$ on which the second-order growth condition holds:
\begin{equation*}
f(x) \ge f(\bar{x}) + \la \bar{v}, x - \bar{x} \ra + \frac{\kappa}{2}\|x - \bar{x}\|^2\;\mbox{ for all }\;x\in U.
\end{equation*}

\item[\rm\textbf{(ii)}] We have the second-order subderivative estimate:
\begin{equation*}
d^2 f(\bar{x}|\bar{v})(w) \ge \mu\|w\|^2\;\mbox{ for all }\;w\in\R^n.
\end{equation*}
\end{itemize} 
Then implication {\rm\textbf{(i)}$\Longrightarrow$\textbf{(ii)}} is satisfied with $\mu = \kappa$, while {\rm\textbf{(ii)}$\Longrightarrow$\textbf{(i)}} holds with $\mu > \kappa$.
\end{Proposition}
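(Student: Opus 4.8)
The plan is to establish the two implications separately, working directly from the definition of the second-order subderivative in \eqref{eq:d2}, with no appeal to prox-regularity or to Moreau envelopes.

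For the implication \textbf{(i)}$\Rightarrow$\textbf{(ii)} with $\mu=\kappa$, I would fix $w\in\R^n$ together with arbitrary sequences $t_k\downarrow 0$ and $w_k\to w$. For all sufficiently large $k$ the point $\bar{x}+t_k w_k$ lies in the neighborhood $U$ provided by \textbf{(i)}, so substituting $x=\bar{x}+t_k w_k$ into the second-order growth inequality and dividing by $\tfrac12 t_k^2$ yields $\Delta_{t_k}^2 f(\bar{x}|\bar{v})(w_k)\ge\kappa\|w_k\|^2$. Passing to the lower limit and using $w_k\to w$ gives $\liminf_k\Delta_{t_k}^2 f(\bar{x}|\bar{v})(w_k)\ge\kappa\|w\|^2$; since the sequences $\{t_k\}$ and $\{w_k\}$ were arbitrary, formula \eqref{eq:d2} produces $d^2 f(\bar{x}|\bar{v})(w)\ge\kappa\|w\|^2$, which is \textbf{(ii)}.

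For the implication \textbf{(ii)}$\Rightarrow$\textbf{(i)} with $\mu>\kappa$, I would argue by contradiction. Fix $\kappa<\mu$ and suppose that the growth condition in \textbf{(i)} fails on every neighborhood of $\bar{x}$. Then there is a sequence $x_k\to\bar{x}$ with $x_k\ne\bar{x}$ such that $f(x_k)<f(\bar{x})+\langle\bar{v},x_k-\bar{x}\rangle+\tfrac{\kappa}{2}\|x_k-\bar{x}\|^2$. Set $t_k:=\|x_k-\bar{x}\|\downarrow 0$ and $w_k:=(x_k-\bar{x})/t_k$, so that $\|w_k\|=1$; passing to a subsequence we may assume $w_k\to w$ with $\|w\|=1$. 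Dividing the displayed strict inequality by $\tfrac12 t_k^2$ gives $\Delta_{t_k}^2 f(\bar{x}|\bar{v})(w_k)<\kappa=\kappa\|w_k\|^2$, and hence by \eqref{eq:d2} we obtain
$$
d^2 f(\bar{x}|\bar{v})(w)\le\liminf_{k\to\infty}\Delta_{t_k}^2 f(\bar{x}|\bar{v})(w_k)\le\kappa<\mu=\mu\|w\|^2,
$$
which contradicts \textbf{(ii)}. Therefore \textbf{(i)} holds with the constant $\kappa$ on some neighborhood of $\bar{x}$, and this is precisely the assertion that \textbf{(ii)}$\Rightarrow$\textbf{(i)} holds whenever $\mu>\kappa$.

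The only point requiring care — and thus the ``main obstacle'' — is the strictness $\mu>\kappa$ in the second implication: after normalizing the violating sequence and extracting a unit-norm limiting direction $w$, the failed growth inequality forces only $d^2 f(\bar{x}|\bar{v})(w)\le\kappa$, so one genuinely needs $\mu>\kappa$ for this to contradict the lower bound $\mu\|w\|^2=\mu$ coming from \textbf{(ii)}. The normalization also guarantees $\|w\|=1\ne 0$, which is what makes the right-hand side $\mu\|w\|^2$ strictly positive and keeps the argument from degenerating; apart from the compactness of the unit sphere, no further structural hypotheses on $f$ enter.
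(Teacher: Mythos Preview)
Your argument is correct. Both directions are handled cleanly: for \textbf{(i)}$\Rightarrow$\textbf{(ii)} you bound the second-order difference quotients from below along arbitrary sequences $t_k\downarrow 0$, $w_k\to w$ and pass to the liminf in \eqref{eq:d2}; for \textbf{(ii)}$\Rightarrow$\textbf{(i)} you normalize a violating sequence, use compactness of the unit sphere in $\R^n$ to extract a limiting direction $w$ with $\|w\|=1$, and obtain $d^2 f(\bar{x}|\bar{v})(w)\le\kappa<\mu$, which is the contradiction. Your remark that the strict gap $\mu>\kappa$ is genuinely needed here is exactly right.

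Note, however, that the present paper does not supply its own proof of this proposition: it is quoted verbatim from \cite[Theorem~3.4]{gtdquad} and used as a black box. So there is no ``paper's proof'' to compare against beyond that citation. Your direct argument from the definition \eqref{eq:d2} is the standard route to this kind of equivalence and is precisely what one expects the proof in \cite{gtdquad} to be; nothing more sophisticated (no prox-regularity, no Moreau envelopes) is required, as you correctly observe.
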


Here is a direct consequence of Propositions~\ref{theo:svarchar} and \ref{prop:strongchar}, which is taken from \cite[Corollary 3.6]{gtdquad}.

\begin{Corollary}\label{cor:d2prox} Let $s\in\R$, and let $f: \R^n \to \overline{\R}$ be variationally $s$-convex at $\bar{x} \in \dom f$ for $\bar{v} \in \partial f(\bar{x})$. Then there exist $\ve>0$ and an $f$-attentive localization $T_\ve$ of $\partial f$ around $(\bar{x},\bar{v})$ such that 
\begin{equation*}
d^2 f(x|v)(w) \ge s\|w\|^2\;\mbox{ for all }\;w\in\R^n\;\mbox{ and }\;(x,v)\in \gph T_\ve.
\end{equation*}
\end{Corollary}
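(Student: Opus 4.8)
The plan is to read this off from Propositions~\ref{theo:svarchar} and \ref{prop:strongchar}: variational $s$-convexity gives a uniform second-order growth inequality at every point of an $f$-attentive localization, and Proposition~\ref{prop:strongchar} converts such growth into the desired lower bound on the second-order subderivative, with no loss in the modulus.

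First I would invoke Proposition~\ref{theo:svarchar} with an arbitrary radius $\ve_0>0$: since $f$ is variationally $s$-convex at $\bar x$ for $\bar v$, there exist a neighborhood $U$ of $\bar x$ and an $f$-attentive $\ve_0$-localization $T_{\ve_0}$ of $\partial f$ around $(\bar x,\bar v)$ with
$$
f(x') \ge f(x) + \la v, x' - x\ra + \frac{s}{2}\|x'-x\|^2 \quad\text{for all }\;x'\in U\;\text{ and }\;(x,v)\in\gph T_{\ve_0}.
$$
Picking $\rho>0$ with $B(\bar x,\rho)\subseteq U$ and setting $\ve:=\min\{\ve_0,\rho/2\}$, the monotonicity of $f$-attentive localizations in the radius gives $\gph T_\ve\subseteq\gph T_{\ve_0}$, so the displayed inequality persists for all $(x,v)\in\gph T_\ve$ and $x'\in U$. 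Now fix any $(x,v)\in\gph T_\ve$. Then $x\in B(\bar x,\ve)$, hence $B(x,\ve)\subseteq B(\bar x,2\ve)\subseteq B(\bar x,\rho)\subseteq U$, so $U$ is a neighborhood of $x$ on which the second-order growth condition of Proposition~\ref{prop:strongchar}\,\textbf{(i)} holds at the base point $x$ with vector $v$ and modulus $\kappa=s$. Moreover $x\in\dom\partial f\subseteq\dom f$ and $f$ is proper l.s.c., so Proposition~\ref{prop:strongchar} applies, and its implication \textbf{(i)}$\Rightarrow$\textbf{(ii)} with $\mu=\kappa=s$ yields $d^2 f(x|v)(w)\ge s\|w\|^2$ for all $w\in\R^n$. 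Since $(x,v)\in\gph T_\ve$ was arbitrary, this proves the claim.

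There is essentially no genuine obstacle here — the statement is a bookkeeping combination of the two cited propositions. The only points that deserve a line of care are (a) arranging that the single neighborhood $U$ furnished by Proposition~\ref{theo:svarchar} is simultaneously a neighborhood of every localization point $x$, which is handled by shrinking the localization radius $\ve$ relative to $U$; and (b) checking that the modulus transfers exactly, which it does because the relevant direction \textbf{(i)}$\Rightarrow$\textbf{(ii)} of Proposition~\ref{prop:strongchar} is the one with $\mu=\kappa$, so no constant is lost.
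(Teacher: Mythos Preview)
Your proof is correct and follows essentially the same route as the paper: combine Proposition~\ref{theo:svarchar} with the implication \textbf{(i)}$\Rightarrow$\textbf{(ii)} of Proposition~\ref{prop:strongchar}, after shrinking the localization radius so that the neighborhood $U$ contains a ball around every point $x$ of the localization. The paper states this corollary without proof here (referring to \cite[Corollary~3.6]{gtdquad}), but exactly this argument appears inline in the proof of Theorem~\ref{theo:varcharquad}.
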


To quantify variational convexity, the {\em modulus of variational $s$-convexity} $\operatorname{cnv} f(\bar{x}|\bar{v})$ of an l.s.c.\ function $f\colon\R^n\to\oR$ at $\ox\in\dom f$ for $\bar{v} \in \partial f(\bar{x})$ was introduced in the recent preprints by Gfrerer \cite{gfrerer2024} and Rockafellar \cite{roc24} as the limit superior of all the real numbers $s$ in Definition~\ref{defi:varconv} when the neighborhood $U \times V$ of $(\bar{x},\bar{v})$ shrinks to the singleton $\{(\bar{x},\bar{v})\}$ and $\varepsilon \downarrow 0$. A negative real value of this quantity indicates prox-regularity while a nonnegative value indicates (strong) variational convexity. Several quantitative expressions for $\operatorname{cnv} f(\bar{x}|\bar{v})$ were obtained in \cite{gfrerer2024,roc24}. In what follows, we'll use the one taken from \cite{roc24}:
\begin{equation}\label{def:cnv}
\operatorname{cnv} f(\bar{x}|\bar{v}) = \limsup\limits_{\substack{ \beta \downarrow 0\\ (x,v) \to (\bar{x},\bar{v})\\f(x) \to f(\bar{x})}}\Bigg( \min\limits_{\substack{\|w\| = 1 \\ \tau \in (0, \beta]}} \Delta_\tau^2 f(x|v)(w)\Bigg).
\end{equation}

\section{Generalized Twice Differentiability and Quadratic Bundles}\label{sec:recap}

In this section, we recall the notions of \textit{generalized twice differentiability} and \textit{quadratic bundles} of functions while providing essential facts that will be useful for deriving the main results of this paper in the subsequent sections. More details and the additional material can be found in our preceding paper \cite{gtdquad}.

Given a matrix $A \in \R^{n \times n}$, denote the {\em quadratic form} associated with $A$ by
\begin{equation}\label{qua}
q_A(x) := \la x, Ax \ra\;\mbox{ for all }\;x \in \R^n.
\end{equation}
The next proposition presents a generalized version of the Cauchy-Schwartz inequality.

\begin{Proposition} Let $A\in\R^{n\times n}$ be a symmetric and positive-definite matrix, and let $q_A$ be defined in \eqref{qua}. Then we have the generalized Cauchy-Schwartz inequality 
\begin{equation}\label{eq:genCS}
q_A(x) + q_{A^{-1}}(y) \ge 2 \la x, y \ra\;\mbox{ for all }\; x,y\in\R^n.
\end{equation}
\end{Proposition}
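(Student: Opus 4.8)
The plan is to diagonalize the problem using a square root of $A$. Since $A$ is symmetric and positive-definite, it admits a symmetric positive-definite square root $A^{1/2}$ with $(A^{1/2})^2 = A$, and correspondingly $A^{-1/2} := (A^{1/2})^{-1}$ satisfies $(A^{-1/2})^2 = A^{-1}$. The key observation is the algebraic identity
$$
q_A(x) + q_{A^{-1}}(y) - 2\la x, y\ra = \big\| A^{1/2} x - A^{-1/2} y \big\|^2 \ge 0,
$$
which, once verified, immediately yields \eqref{eq:genCS}. To confirm the identity, expand the right-hand side as $\la A^{1/2}x, A^{1/2}x\ra - 2\la A^{1/2}x, A^{-1/2}y\ra + \la A^{-1/2}y, A^{-1/2}y\ra$ and use self-adjointness of $A^{1/2}$ and $A^{-1/2}$: the first term is $\la x, Ax\ra = q_A(x)$, the last term is $\la y, A^{-1}y\ra = q_{A^{-1}}(y)$, and the middle cross term is $-2\la x, A^{1/2}A^{-1/2}y\ra = -2\la x, y\ra$ because $A^{1/2}A^{-1/2} = I$.

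An equivalent and perhaps cleaner route is the substitution $u := A^{1/2}x$, $w := A^{1/2}y$, under which $q_A(x) = \|u\|^2$, $q_{A^{-1}}(y) = \la y, A^{-1}y\ra = \|A^{-1/2}y\|^2$; but to make the cross term match one instead sets $w := A^{-1/2}y$, so that $\la x, y\ra = \la A^{-1/2}u, A^{1/2}w\ra = \la u, w\ra$ and $q_{A^{-1}}(y) = \|w\|^2$. Then \eqref{eq:genCS} reduces to $\|u\|^2 + \|w\|^2 \ge 2\la u, w\ra$, i.e. $\|u - w\|^2 \ge 0$, which is the classical elementary inequality. This makes transparent that the generalized Cauchy-Schwartz inequality is nothing but the ordinary $ab \le \tfrac12(a^2+b^2)$ inequality transported through the linear isomorphism $A^{1/2}$.

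I do not anticipate any real obstacle here: the only thing to be slightly careful about is the existence and properties of the symmetric square root (existence follows from the spectral decomposition $A = Q\,\mathrm{diag}(\lambda_i)\,Q^\top$ with all $\lambda_i > 0$, setting $A^{1/2} := Q\,\mathrm{diag}(\sqrt{\lambda_i})\,Q^\top$), and the fact that $A^{1/2}$ commutes with $A^{-1}$ and that $A^{1/2}A^{-1/2} = I$, all of which are standard. No appeal to any earlier result in the paper is needed.
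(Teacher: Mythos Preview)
Your proof is correct. Both your argument and the paper's rest on the spectral theorem for symmetric positive-definite matrices, so in spirit they are the same; the difference is only in packaging. The paper diagonalizes explicitly as $A = PDP^{T}$ with $D = \mathrm{diag}(\lambda_1,\ldots,\lambda_n)$, rewrites $q_A(x) + q_{A^{-1}}(y)$ in the coordinates $P^Tx$, $P^Ty$, and then applies the scalar inequality $\lambda_i a_i^2 + \lambda_i^{-1} b_i^2 \ge 2a_ib_i$ term by term. Your route bundles all of this into the single identity $q_A(x) + q_{A^{-1}}(y) - 2\la x,y\ra = \|A^{1/2}x - A^{-1/2}y\|^2$, which is cleaner and also exhibits the equality case ($y = Ax$) transparently. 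Neither approach requires anything beyond the spectral decomposition, and your remark that the square root is built from that decomposition makes the dependence explicit.
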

\begin{proof}
Denote by $\lambda_1,\ldots,\lambda_n > 0$ the eigenvalues of $A$. By performing the standard diagonalization, we find an orthogonal matrix $P$ (i.e., $P^T = P^{-1})$ and $D: = \mathrm{diag} (\lambda_1,\ldots,\lambda_n)$ such that $A = PDP^{T}$. Then
$$
A^{-1} = (P^T)^{-1}D^{-1}P^{-1} = PD^{-1}P^T,
$$
which brings us to the relationships
$$
\begin{aligned}
q_{A}(x) + q_{A^{-1}}(y) &= \la x, PDP^{T}x \ra + \la y, PDP^{T}y \ra \\
&= \la P^T x, DP^Tx \ra + \la P^T y, D^{-1}P^Ty \ra\\
&= \sum\limits_{i=1}^n \left( \lambda_i (P^T x)_i^2 + \lambda_i^{-1}(P^T y)_i^2 \right)\\
&\ge 2\sum\limits_{i=1}^n (P^T x)_i(P^T y)_i =  2 \la P^Tx, P^Ty \ra =  2\la x,y \ra
\end{aligned}
$$
and thus verifies the claimed inequality \eqref{eq:genCS}.
\end{proof} 

An extension of the quadratic form \eqref{qua} is provided below. 

\begin{Definition}\rm A function $q: \R^n \to \overline{\R}$ is called a \textit{generalized quadratic form} if it is expressible as $q = q_A + \delta_L$, where $L$ is a linear subspace of $\R^n$, and where $A \in \R^{n \times n}$ is a symmetric matrix. 
\end{Definition}

Generalized quadratic forms are crucial for defining generalized twice differentiability of extended-real-valued functions due to Rockafellar \cite{roc}.

\begin{Definition}\label{defi:quaddiff} \rm 
A function $f: \R^n \to \overline{\R}$ is \textit{generalized twice differentiable} at $\bar{x}$ for $\bar{v} \in \partial f(\bar{x})$ if it is twice epi-differentiable at $\bar{x}$ for $\bar{v}$ and if the second-order subderivative $d^2 f(\bar{x}|\bar{v})$ is a generalized quadratic form.
\end{Definition}

The next result, taken from \cite[Proposition~4.8]{gtdquad}, helps us to determine the generalized twice differentiability of functions under summation.

\begin{Proposition}\label{prop:sumC2gendif} Let $g: \R^n \to \overline{\R}$ be l.s.c.\ at $\ox\in\dom f$ with $\bar{v} \in \partial g(\bar{x})$, and let $f: \R^n \to \overline{\R}$  be strictly differentiable at $\bar{x}$ and twice differentiable at this point. Then the following hold:
\begin{itemize}
\item[\rm\textbf{(i)}] We always have the equality
$$
d^2 (f+g)(\bar{x}|\nabla f(\bar{x}) + \bar{v})(w) = \la w, \nabla^2 f(\bar{x})w \ra + d^2 g(\bar{x}|\bar{v})(w)\;\mbox{ for all }\;w\in\R^n.
$$
\item[\rm\textbf{(ii)}] If $g$ is generalized twice differentiable at $\bar{x}$ for $\bar{v}$, then the summation function $f + g$ is generalized twice differentiable at $\bar{x}$ for $\nabla f(\bar{x}) + \bar{v}$.
\end{itemize}
\end{Proposition}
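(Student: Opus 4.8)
The plan is to establish part \textbf{(i)} first by a direct epi-convergence argument on the second-order difference quotients, and then deduce part \textbf{(ii)} as an almost immediate corollary. First I would write out the second-order difference quotient of the sum at $\bar{x}$ for the subgradient $\nabla f(\bar{x}) + \bar{v}$ and split it additively:
\begin{equation*}
\Delta_t^2 (f+g)(\bar{x}\,|\,\nabla f(\bar{x})+\bar{v})(w) = \Delta_t^2 f(\bar{x}\,|\,\nabla f(\bar{x}))(w) + \Delta_t^2 g(\bar{x}\,|\,\bar{v})(w).
\end{equation*}
Since $f$ is twice differentiable at $\bar{x}$, the first term converges \emph{pointwise} and in fact locally uniformly in $w$ to the quadratic form $\la w, \nabla^2 f(\bar{x}) w\ra$; strict differentiability at $\bar{x}$ is what guarantees the remainder is $o(t^2)$ uniformly for $w$ in bounded sets. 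The key observation is that adding a sequence of functions converging \emph{continuously} (locally uniformly) to a finite continuous function does not disturb epi-limits: this is the standard fact that $f_k \xrightarrow{e} f$ and $h_k \to h$ continuously imply $f_k + h_k \xrightarrow{e} f + h$ (see \cite{Rockafellar98}, e.g.\ around Theorem~7.46/Exercise~7.47). Applying this with $f_k = \Delta_{t_k}^2 g(\bar{x}|\bar{v})$, $h_k = \Delta_{t_k}^2 f(\bar{x}|\nabla f(\bar{x}))$ along an arbitrary sequence $t_k \downarrow 0$ gives the equality of epi-limits, hence \eqref{eq:d2} for $f+g$ equals $\la w, \nabla^2 f(\bar{x})w\ra + d^2 g(\bar{x}|\bar{v})(w)$, which is \textbf{(i)}.

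If one prefers to avoid invoking the continuous-convergence machinery as a black box, I would instead argue directly through Proposition~\ref{prop:epichar}: for the \emph{liminf} inequality, given any $w_k \to w$ with $t_k \downarrow 0$, the $f$-part of the quotient evaluated at $w_k$ tends to $\la w, \nabla^2 f(\bar{x})w\ra$ by twice differentiability (using $w_k \to w$ bounded), so $\liminf$ of the sum at $w_k$ equals $\la w,\nabla^2 f(\bar{x})w\ra + \liminf \Delta_{t_k}^2 g(\bar{x}|\bar{v})(w_k) \ge \la w,\nabla^2 f(\bar{x})w\ra + d^2 g(\bar{x}|\bar{v})(w)$; for the \emph{limsup} (recovery-sequence) inequality, take the recovery sequence $w_k \to w$ for $g$, i.e.\ with $\Delta_{t_k}^2 g(\bar{x}|\bar{v})(w_k) \to d^2 g(\bar{x}|\bar{v})(w)$, and note the $f$-part along the same $w_k$ still tends to $\la w,\nabla^2 f(\bar{x})w\ra$. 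Either route gives the same conclusion; I would likely present the direct version since it is self-contained and short.

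For part \textbf{(ii)}, assume in addition that $g$ is generalized twice differentiable at $\bar{x}$ for $\bar{v}$, so $d^2 g(\bar{x}|\bar{v}) = q_B + \delta_L$ for some symmetric $B$ and subspace $L$, and the epi-liminf in \eqref{eq:d2} is a genuine limit. From \textbf{(i)} and the fact that the $f$-part is a full (locally uniform) limit, the epi-liminf for $f+g$ is also a full limit, so $f+g$ is twice epi-differentiable at $\bar{x}$ for $\nabla f(\bar{x})+\bar{v}$. Moreover
\begin{equation*}
d^2(f+g)(\bar{x}\,|\,\nabla f(\bar{x})+\bar{v})(w) = \la w, \nabla^2 f(\bar{x})w\ra + q_B(w) + \delta_L(w) = q_{\nabla^2 f(\bar{x})+B}(w) + \delta_L(w),
\end{equation*}
which is again a generalized quadratic form with symmetric matrix $\nabla^2 f(\bar{x}) + B$ and the same subspace $L$. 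Hence $f+g$ is generalized twice differentiable at $\bar{x}$ for $\nabla f(\bar{x})+\bar{v}$, completing the proof.

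\textbf{Main obstacle.} The only subtle point is the interchange of the epi-limit with the addition of the smooth term, i.e.\ making precise that the quotient $\Delta_t^2 f(\bar{x}|\nabla f(\bar{x}))(\cdot)$ converges \emph{locally uniformly} (not merely pointwise) to $\la \cdot, \nabla^2 f(\bar{x})\cdot\ra$ as $t \downarrow 0$; this is exactly where strict differentiability of $f$ at $\bar{x}$ (beyond mere twice differentiability) enters, and it is what licenses using the same sequence $w_k$ for both the liminf bound and the recovery sequence. Everything else is bookkeeping with Proposition~\ref{prop:epichar}.
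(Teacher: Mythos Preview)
The paper does not supply its own proof of this proposition; it is merely quoted from the companion paper \cite[Proposition~4.8]{gtdquad}. Your argument is the natural one and is correct in substance: the additive splitting of the second-order difference quotient, together with the fact that $\Delta_t^2 f(\bar{x}\,|\,\nabla f(\bar{x}))(\cdot)$ converges locally uniformly to the quadratic form $q_{\nabla^2 f(\bar{x})}$, lets you pass the epi-lower-limit through the sum exactly as you describe via Proposition~\ref{prop:epichar}, and part \textbf{(ii)} then follows because the full epi-limit for $g$ plus the continuous (locally uniform) limit for $f$ gives a full epi-limit for $f+g$, with $d^2(f+g)(\bar{x}\,|\,\nabla f(\bar{x})+\bar{v}) = q_{\nabla^2 f(\bar{x})+B} + \delta_L$ a generalized quadratic form.

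One small correction to your commentary: the locally uniform convergence of $\Delta_t^2 f(\bar{x}\,|\,\nabla f(\bar{x}))(w)$ to $\la w,\nabla^2 f(\bar{x})w\ra$ on bounded sets of $w$ follows already from twice differentiability at $\bar{x}$ (the second-order Taylor remainder is $o(\|tw\|^2)$, hence $o(t^2)$ uniformly for bounded $w$). Strict differentiability is a first-order hypothesis and is not what controls that remainder. Its actual role in the statement is to guarantee the exact subdifferential sum rule $\partial(f+g)(\bar{x}) = \nabla f(\bar{x}) + \partial g(\bar{x})$, so that $\nabla f(\bar{x}) + \bar{v} \in \partial(f+g)(\bar{x})$ and the phrase ``generalized twice differentiable at $\bar{x}$ for $\nabla f(\bar{x})+\bar{v}$'' in \textbf{(ii)} is well-posed in the sense of Definition~\ref{defi:quaddiff}. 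You should relocate that justification accordingly.
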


The following proposition, established in \cite[Lemma~5.3]{gtdquad}, provides a limiting correspondence between the prox-regular function in question and the associated Moreau envelope.

\begin{Proposition}\label{lem:zkvkxk} Assume that $f: \R^n \to \overline{\R}$ is prox-regular at $\bar{x}$ for $\bar{v} \in \partial f(\bar{x})$ and fix $\lambda > 0$ so small that all the assertions in 
Proposition~{\rm\ref{prop:prox-mor-pr}} hold; in particular, $e_{\lambda}f$ is of class $\mathcal{C}^{1,1}$ around $\bar{z}:= \bar{x} + \lambda \bar{v}$. Given a sequence $z_k \to \bar{z}$, define the vectors
\begin{equation}\label{eq:vkxk}
 v_k := \nabla  e_{\lambda} f(z_k), \quad x_k := z_k - \lambda v_k.   
\end{equation}
Then we have $v_k \in \partial f(x_k)$ for all large $k\in\N$ and
$$
(x_k,v_k) \to (\bar{x},\bar{v}), \quad f(x_k) \to f(\bar{x})\;\mbox{ as }\;k\to\infty.
$$
\end{Proposition}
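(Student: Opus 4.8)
The plan is to exploit the two identities for the gradient of the Moreau envelope in Proposition~\ref{prop:prox-mor-pr}(ii), namely $\nabla e_\lambda f = \lambda^{-1}[I - P_\lambda f]$ and $\nabla e_\lambda f = [\lambda I + T_\ve^{-1}]^{-1}$, to convert the convergence $z_k \to \bar z$ into the desired convergence of the triples $(x_k, v_k, f(x_k))$. First I would note that since $e_\lambda f$ is of class $\mathcal C^{1,1}$ on a neighborhood of $\bar z$ and $z_k \to \bar z$, the definition $v_k := \nabla e_\lambda f(z_k)$ gives $v_k \to \nabla e_\lambda f(\bar z)$ by continuity of the gradient; combined with $x_k := z_k - \lambda v_k$ this yields $x_k \to \bar z - \lambda \nabla e_\lambda f(\bar z)$. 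Using $P_\lambda f(\bar z) = \bar x + \lambda \bar v = \bar z$ from Proposition~\ref{prop:prox-mor-pr}(i) together with the first identity in \eqref{eq:gradenv}, we get $\nabla e_\lambda f(\bar z) = \lambda^{-1}(\bar z - P_\lambda f(\bar z)) = \lambda^{-1}(\bar z - \bar z)$... wait, that gives $0$, which is wrong since $\nabla e_\lambda f(\bar z)$ should be $\bar v$. Let me reconsider: in fact $P_\lambda f(\bar z) = \bar x$ (not $\bar z$), because the stated fixed-point relation is $P_\lambda f(\bar x + \lambda \bar v) = \bar x$; the excerpt's ``$\bar x + \lambda \bar v$'' on the right appears to be a typo and the intended value is $\bar x$. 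With $P_\lambda f(\bar z) = \bar x$ we obtain $\nabla e_\lambda f(\bar z) = \lambda^{-1}(\bar z - \bar x) = \lambda^{-1}((\bar x + \lambda\bar v) - \bar x) = \bar v$, hence $v_k \to \bar v$ and $x_k = z_k - \lambda v_k \to \bar z - \lambda\bar v = \bar x$. This disposes of the convergence $(x_k, v_k) \to (\bar x, \bar v)$.

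Next I would establish the subgradient membership $v_k \in \partial f(x_k)$ for large $k$. For $z_k$ in the neighborhood where the second identity $\nabla e_\lambda f = [\lambda I + T_\ve^{-1}]^{-1}$ holds, we have $v_k = \nabla e_\lambda f(z_k) \in [\lambda I + T_\ve^{-1}]^{-1}(z_k)$, which unravels to $z_k \in (\lambda I + T_\ve^{-1})(v_k)$, i.e., $z_k - \lambda v_k \in T_\ve^{-1}(v_k)$, that is $x_k \in T_\ve^{-1}(v_k)$, equivalently $v_k \in T_\ve(x_k)$. Since $\gph T_\ve \subset \gph \partial f$, this gives $v_k \in \partial f(x_k)$. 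Moreover membership in $\gph T_\ve$ carries with it, by the definition \eqref{eq:fattentive} of the $f$-attentive localization, the inequality $f(x_k) < f(\bar x) + \varepsilon$; but this only gives a one-sided bound, not convergence of the function values, so a further argument is needed for $f(x_k) \to f(\bar x)$.

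For the function-value convergence, the key is the Moreau-envelope identity $e_\lambda f(z_k) = f(x_k) + \frac{1}{2\lambda}\|x_k - z_k\|^2 = f(x_k) + \frac{\lambda}{2}\|v_k\|^2$, which holds because $P_\lambda f$ is single-valued near $\bar z$ (Proposition~\ref{prop:prox-mor-pr}(i)) and $x_k = P_\lambda f(z_k)$ realizes the infimum defining $e_\lambda f(z_k)$. Solving for $f(x_k)$ gives $f(x_k) = e_\lambda f(z_k) - \frac{\lambda}{2}\|v_k\|^2$. Now $e_\lambda f$ is continuous near $\bar z$, so $e_\lambda f(z_k) \to e_\lambda f(\bar z) = f(\bar x) + \frac{\lambda}{2}\|\bar v\|^2$ (the latter again from $P_\lambda f(\bar z) = \bar x$), and $\|v_k\|^2 \to \|\bar v\|^2$ from the already-established $v_k \to \bar v$. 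Therefore $f(x_k) \to f(\bar x) + \frac{\lambda}{2}\|\bar v\|^2 - \frac{\lambda}{2}\|\bar v\|^2 = f(\bar x)$, completing the proof.

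The main obstacle is really just the bookkeeping around \eqref{eq:gradenv}: one must be careful that the neighborhood on which both formulas for $\nabla e_\lambda f$ are valid is the same, that $P_\lambda f$ is single-valued there so that the envelope is attained at exactly $x_k$, and that the fixed-point value is $P_\lambda f(\bar x + \lambda\bar v) = \bar x$ so that $\nabla e_\lambda f(\bar z) = \bar v$; everything else is continuity of $\mathcal C^{1}$ and $\mathcal C^{0}$ functions along a convergent sequence. No genuinely hard analysis is involved once Proposition~\ref{prop:prox-mor-pr} is in hand.
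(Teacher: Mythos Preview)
Your argument is correct. The paper does not supply its own proof of this proposition---it simply quotes the result from \cite[Lemma~5.3]{gtdquad}---so there is nothing to compare against; your route via the two identities in \eqref{eq:gradenv} together with the attainment formula $e_\lambda f(z_k)=f(x_k)+\tfrac{\lambda}{2}\|v_k\|^2$ is exactly the standard derivation. Your observation that the displayed value $P_\lambda f(\bar x+\lambda\bar v)=\bar x+\lambda\bar v$ in Proposition~\ref{prop:prox-mor-pr}(i) should read $P_\lambda f(\bar x+\lambda\bar v)=\bar x$ is also right (cf.\ \cite[Theorem~13.37]{Rockafellar98}), and once that is corrected the rest of your computation goes through verbatim.
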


The next result, taken from \cite[Theorem~5.8 and Corollary~5.9]{gtdquad}, reduces the generalized twice differentiability of prox-regular functions to the classical twice differentiability of Moreau envelopes while playing a crucial role in our approach to characterize variational convexity and tilt stability.

\begin{Proposition}\label{cor:gtdtwicediff} Given $r>0$, assume that $f: \R^n \to \overline{\R}$ is prox-bounded on $\R^n$ and  $r$-level prox-regular at $\bar{x}\in\dom f$ for $\bar{v}\in \partial f(\bar{x})$. Then there exists an $f$-attentive $\ve$-localization $T_\ve$ of $\partial f$ around $(\bar{x},\bar{v})$ for any $\ve>0$ sufficiently small, such that the following are equivalent whenever $\lambda \in (0,1/r)$ and $(x,v) \in \gph T_\ve$:
\begin{itemize}
\item[{\rm\textbf{(i)}}] $f$ is generalized twice differentiable at $x$ for $v$. 

\item[{\rm\textbf{(ii)}}] $e_{\lambda} f$ is twice differentiable at $x + \lambda v$.
\end{itemize}
Moreover, we have the relationship
\begin{equation}\label{eq:envd^22}
 e_{\lambda}\Big[ \frac{1}{2}d^2 f (x|v)\Big] =  d^2\Big[ \frac{1}{2} e_{\lambda} f\Big]  \left(x + \lambda v \Big| \dfrac{1}{2}v \right.\Big)\;\mbox{ for all }\;(x,v) \in \gph T_\ve.
\end{equation}
\end{Proposition}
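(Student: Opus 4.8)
The plan is to reduce both the equivalence and the displayed formula to a single \emph{exact} identity relating the second-order difference quotients of $e_\lm f$ at $x+\lm v$ to a Moreau envelope of those of $f$ at $(x,v)$, and then to pass to the epigraphical limit as $t\downarrow0$. First, using prox-boundedness and $r$-level prox-regularity, I would invoke Proposition~\ref{prop:prox-mor-pr}: for each $\lm\in(0,1/r)$ there is a neighborhood of $\bar z:=\bar x+\lm\bar v$ on which $e_\lm f$ is of class $\mathcal{C}^{1,1}$, $P_\lm f$ is single-valued and Lipschitz with $P_\lm f(\bar z)=\bar x$, and $\nabla e_\lm f=\lm^{-1}[I-P_\lm f]=(\lm I+T_\ve^{-1})^{-1}$. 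Shrinking $\ve>0$ and using that $r$-level prox-regularity propagates along the $f$-attentive localization (via Proposition~\ref{lem:zkvkxk}, as in \cite{gtdquad}), I would arrange that for every $(x,v)\in\gph T_\ve$ the shifted point $z:=x+\lm v$ lies in that neighborhood, so that $P_\lm f(z)=x$, $\nabla e_\lm f(z)=v$, $e_\lm f(z)=f(x)+\tfrac\lm2\|v\|^2$, and the prox-regularity estimate $f(u)\ge f(x)+\la v,u-x\ra-\tfrac r2\|u-x\|^2$ holds for $u$ near $x$, uniformly in $(x,v)\in\gph T_\ve$.

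The core step is the identity: for $(x,v)\in\gph T_\ve$, $z=x+\lm v$, all $t>0$ and all $\xi\in\R^n$,
\begin{equation*}
\Delta_t^2(e_\lm f)(z\,|\,v)(\xi)=\inf_{w\in\R^n}\Big\{\Delta_t^2 f(x\,|\,v)(w)+\tfrac1\lm\|w-\xi\|^2\Big\}=e_{\lm/2}\big[\Delta_t^2 f(x\,|\,v)\big](\xi),
\end{equation*}
which I would obtain by substituting $u=x+tw$ in $e_\lm f(z+t\xi)=\inf_u\{f(u)+\tfrac1{2\lm}\|u-z-t\xi\|^2\}$, expanding with $x-z=-\lm v$, and using $e_\lm f(z)=f(x)+\tfrac\lm2\|v\|^2$ and $\nabla e_\lm f(z)=v$ to cancel the zeroth- and first-order terms (the last equality holds because $\tfrac1\lm=\tfrac1{2(\lm/2)}$). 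Granting the limiting step below, this yields $d^2(e_\lm f)(z\,|\,v)=e_{\lm/2}[d^2 f(x\,|\,v)]$, and then \eqref{eq:envd^22} follows purely formally from $e_\lm[\tfrac12\psi]=\tfrac12 e_{\lm/2}[\psi]$ together with the degree-two positive homogeneity $d^2[\tfrac12 e_\lm f](z\,|\,\tfrac12 v)=\tfrac12 d^2(e_\lm f)(z\,|\,v)$. In parallel, Corollary~\ref{cor:d2prox} (applied with $s=-r$) and the uniform estimate above show that the family $\{\Delta_t^2 f(x\,|\,v)\}_t$ is, for small $t$, minorized by $-r\|\cdot\|^2$ on every bounded set, and $\lm/2<1/(2r)$.

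Passing to the limit $t\downarrow0$ is the main obstacle. By the core identity, $e_\lm f$ is twice epi-differentiable at $z$ for $v$ iff $\{e_{\lm/2}[\Delta_t^2 f(x\,|\,v)]\}_t$ epi-converges. If $f$ is twice epi-differentiable at $x$ for $v$, this is immediate from the epi-continuity of the $(\lm/2)$-Moreau regularization on the uniformly prox-bounded family (cf.\ \cite[Ch.~7]{Rockafellar98}), and the limit is $e_{\lm/2}[d^2 f(x\,|\,v)]$. The converse — recovering epi-convergence of the difference quotients of $f$ from that of their $(\lm/2)$-envelopes — is the delicate direction; it rests on the fact that, because $\lm/2<1/(2r)$ and the relevant second-order epi-limits of the $r$-level prox-regular $f$ are minorized by $-r\|\cdot\|^2$, these limits are proximal-hull stable for the parameter $\lm/2$, so that $e_{\lm/2}$ is injective on them (equivalently, $\eliminf_{t\downarrow0}\Delta_t^2 f(x\,|\,v)$ and $\elimsup_{t\downarrow0}\Delta_t^2 f(x\,|\,v)$ are determined by their $(\lm/2)$-envelopes and coincide once the latter do). This is precisely where $\lm\in(0,1/r)$ is used essentially; the remaining manipulations are bookkeeping with outer and inner set limits and Proposition~\ref{prop:epichar}.

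It remains to reconcile the generalized-quadratic-form requirement with classical twice differentiability of the $\mathcal{C}^{1,1}$ function $e_\lm f$. If $d^2 f(x\,|\,v)=q_A+\delta_L$ with $q_A\ge-r\|\cdot\|^2$ on $L$, then minimizing $q_A(w)+\tfrac1\lm\|w-\xi\|^2$ over $w\in L$ is a well-posed strongly convex problem (since $\tfrac1\lm>r$), and its value is an everywhere-finite quadratic form $q_B$ in $\xi$; hence $d^2(e_\lm f)(z\,|\,v)=e_{\lm/2}[q_A+\delta_L]=q_B$, and since $\nabla e_\lm f$ is Lipschitz, twice epi-differentiability with a quadratic-form second subderivative is equivalent to classical twice differentiability at $z$ with $\nabla^2 e_\lm f(z)=B$ (an elementary consequence of the equi-Lipschitz property of the difference quotients $\Delta_t^2(e_\lm f)(z\,|\,v)$; cf.\ \cite{Rockafellar98}). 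Conversely, classical twice differentiability of $e_\lm f$ at $z$ gives $d^2(e_\lm f)(z\,|\,v)=q_B$ with $B=\nabla^2 e_\lm f(z)$; inverting the envelope as above and solving $e_{\lm/2}[q_A+\delta_L]=q_B$ within the class of generalized quadratic forms by elementary linear algebra (recovering $L$ and $A$ from $B$) realizes $d^2 f(x\,|\,v)$ as a generalized quadratic form, i.e., $f$ is generalized twice differentiable at $x$ for $v$. Together with \eqref{eq:envd^22}, this gives the equivalence (i)$\Leftrightarrow$(ii).
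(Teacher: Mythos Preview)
The paper does not prove this proposition at all: it is quoted verbatim from \cite[Theorem~5.8 and Corollary~5.9]{gtdquad} and used as a black box. So there is no in-paper argument to compare against; the relevant comparison would be with the proof in \cite{gtdquad}, which is not reproduced here. That said, your strategy---the exact identity
\[
\Delta_t^2(e_\lm f)(x+\lm v\,|\,v)\;=\;e_{\lm/2}\big[\Delta_t^2 f(x\,|\,v)\big]
\]
followed by an epigraphical limit---is the natural route and almost certainly the one taken in \cite{gtdquad}; the derivation of this identity and of \eqref{eq:envd^22} from it is correct, and the direction (i)$\Rightarrow$(ii) is essentially complete.

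Where your sketch is genuinely incomplete is the converse. First, the ``delicate direction'' is not just delicate but under-argued: you need a precise mechanism to pass from convergence of the $(\lm/2)$-envelopes back to epi-convergence of $\Delta_t^2 f(x|v)$. The clean way is sequential compactness of epi-convergence plus uniqueness: every sequence $t_k\downarrow0$ has a subsequence along which $\Delta_{t_k}^2 f(x|v)$ epi-converges to some $\psi$; since $\Delta_t^2\big(f+\tfrac r2\|\cdot-x\|^2\big)(x|v)=\Delta_t^2 f(x|v)+r\|\cdot\|^2$ and the left side is convex for small $t$ (by $r$-level prox-regularity at $(x,v)$), every such $\psi$ satisfies that $\psi+r\|\cdot\|^2$ is convex, so $\psi+\tfrac1\lm\|\cdot\|^2$ is convex and hence $\psi$ is uniquely recovered from $e_{\lm/2}\psi=q_B$ via biconjugation. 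Your text gestures at this (``proximal-hull stable'') but does not supply the convexity of $\psi+r\|\cdot\|^2$, which is what makes the injectivity claim true. Second, your final step ``solving $e_{\lm/2}[q_A+\delta_L]=q_B$ \ldots\ recovering $L$ and $A$ from $B$'' presupposes that $d^2 f(x|v)$ already has the form $q_A+\delta_L$; what is actually required is to show that the unique $\psi$ with $\psi+\tfrac1\lm\|\cdot\|^2$ convex and $e_{\lm/2}\psi=q_B$ is a generalized quadratic form. This follows from the conjugate computation $(\psi+\tfrac1\lm\|\cdot\|^2)^*(y)=\tfrac\lm4\langle y,(I-\lm B)y\rangle$ (forcing $I-\lm B\succeq0$ since conjugates are convex) and the fact that the biconjugate of a positive-semidefinite quadratic form is a generalized quadratic form; but this argument must be stated rather than assumed.
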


Now we are ready to recall the definition of \textit{quadratic bundles} for extended-real-valued functions, which was proposed by Rockafellar \cite{roc} for convex functions and then quite recently extended in \cite{gtdquad,roc24} to nonconvex settings. Given $f\colon\R^n\to\oR$, consider the set
\begin{equation}\label{Qf}
\Omega_f :=\big\{(x,v) \in \gph \partial f\;\big|\;f\;\text{ is generalized twice differentiable at }\;x \;\text{ for }\; v \big\}.
\end{equation}

\begin{Definition}\label{defi:quadbund} \rm The \textit{quadratic bundle} $\mathrm{quad} f (\bar{x}|\bar{v})$ of $f$ at $\bar{x} \in \dom f$ for $\bar{v} \in \partial f(\bar{x})$ is defined as the collection of generalized quadratic forms $q$ for which there exists $(x_k,v_k) \xrightarrow{\Omega_f} (\bar{x},\bar{v})$ such that $f(x_k) \to f(\bar{x})$ and the sequence of generalized quadratic forms $q_k = \frac{1}{2} d^2 f(x_k|v_k)$ converges epigraphically to $q$ as $k\to\infty$.
\end{Definition}

As we see from the definition, the quadratic  bundle is a {\em primal-dual} second-order generalized derivative, which involves limiting procedures over primal-dual pairs. If $f$ is subdifferentially continuous at $\bar{x}$ for $\bar{v}$, then the requirement $f(x_k) \to f(\bar{x})$ is superfluous. The following example shows that there can be a significant difference between the revised construction and the original construction denoted by $\qua_o f(\bar{x}|\bar{v})$. To be specific, the notation $\qua_o f(\bar{x}|\bar{v})$ stands for the collection of generalized quadratic forms $q$ for which there exists $(x_k,v_k) \xrightarrow{\Omega_f} (\bar{x},\bar{v})$ such that the sequence of generalized quadratic forms $q_k = \frac{1}{2} d^2 f(x_k|v_k)$ converges epigraphically to $q$ as $k\to\infty$.

\begin{Example}\label{exam:quadandquads} \rm Define the function $f: \R \to \R$ by
$$
f(x) = \left\{\begin{array}{ll}
x^2,  & x \ge 0,  \\
1,  & x < 0.
\end{array} \right. 
$$
Then $\partial f(0) = (-\infty,0]$, $f$ is {\em variationally $2$-strongly convex} at $0$ for $0$ (therefore, it is prox-regular at $0$ for $0$) while {\em not being subdifferentially continuous} at $0$ for $0$. We have here
\begin{equation}\label{eq:quadandquads}
\qua_o f(0|0) = \{ q_{[0]},  q_{[1]}, \delta_{\{0\}}\}, \quad \qua f(0|0) = \{q_{[1]}, \delta_{\{0\}}\}.
\end{equation}
\end{Example} 

\begin{Remark}\label{prop:extractepi} \rm The observations  below clarify the construction of quadratic bundles. 
\begin{itemize}
\item[\textbf{(i)}] The requirements imposed on the sequence $\{(x_k,v_k)\}$
in Definition~\ref{defi:quadbund} {\em automatically yield} the epigraphical convergence of the functions $\dfrac{1}{2}d^2 f(x_k|v_k)$. Indeed, it follows from the structure of the set $\O_f$ in \eqref{Qf} that the choice of $\{(x_k,v_k)\}$  such that $(x_k,v_k) \xrightarrow{\Omega_f} (\bar{x},\bar{v})$ and  $f(x_k) \to f(\bar{x})$ allows us to assume without loss of generality that $\dfrac{1}{2}d^2 f(x_k|v_k)$ epigraphically converge to some function without any additional information. In other words, the construction of quadratic bundles relies {\em solely} on the sequences $\{(x_k,v_k)\}$ that fulfill the imposed requirements.

\item[\textbf{(ii)}] If $f:\R^n \to \overline{\R}$ is prox-regular at $\bar{x}\in\dom f$ for $\bar{v} \in \partial f(\bar{x})$, then for all $(x_k,v_k) \xrightarrow{\Omega_f} (\bar{x},\bar{v})$ and $f(x_k) \to f(\bar{x})$ we have that the epigraphical convergence of the functions $\frac{1}{2}d^2 f(x_k|v_k)$ ensures that the limiting function $q$ is a generalized quadratic form. In other words, $\mathrm{quad} f (\bar{x}|\bar{v})$ accumulates {\em all the 
epi-limits} of the functions $\dfrac{1}{2}d^2 f(x_k|v_k)$ when $(x_k,v_k) \xrightarrow{\Omega_f} (\bar{x},\bar{v})$ and $f(x_k) \to f(\bar{x})$ as $k\to\infty$.
\end{itemize}
\end{Remark}
The next result provides a {\em sum rule} for quadratic bundles employed below; see  
\cite[Theorem~6.6]{gtdquad}. 

\begin{Proposition}\label{prop:sumrulequad} Let $g: \R^n \to \overline{\R}$ be l.s.c.\ at $\ox\in\dom g$ for $\bar{v} \in \partial g(\bar{x})$, and let $f: \R^n \to \overline{\R}$ be $\mathcal{C}^2$-smooth around $\bar{x}$. Then $\nabla f(\bar{x}) + \bar{v} \in \partial (f+g)(\bar{x})$ and
\begin{equation*}
\qua (f+g)(\bar{x}|\nabla f(\bar{x}) + \bar{v}) = \frac{1}{2} q_{\nabla^2 f(\bar{x})} + \qua g(\bar{x} | \bar{v}). 
\end{equation*}
In particular, we have the representations
$$
\qua f(\bar{x}|\nabla f(\bar{x})) = \Big\{ \frac{1}{2} d^2 f(\bar{x}|\nabla f (\bar{x}))\Big\} =\Big\{ \frac{1}{2} q_{\nabla^2 f(\bar{x})}\Big\}.
$$
\end{Proposition}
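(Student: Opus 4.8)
The plan is to transfer everything to $g$ via the $\mathcal{C}^2$ change of the dual variable $v\mapsto\nabla f(x)+v$ and to control the resulting perturbation of the second-order difference quotients by the stability of epi-convergence under adding a uniformly convergent family of smooth functions. Write $h:=f+g$ and $\bar u:=\nabla f(\bar x)+\bar v$. Since $f$ is $\mathcal{C}^2$ (hence strictly differentiable) around $\bar x$, the subdifferential sum rule gives $\partial h(x)=\nabla f(x)+\partial g(x)$ for all $x$ near $\bar x$, which yields $\bar u\in\partial h(\bar x)$ and the first assertion. The map $\Phi(x,v):=(x,\nabla f(x)+v)$ is then a homeomorphism from (a neighborhood of $(\bar x,\bar v)$ in) $\gph\partial g$ onto (a neighborhood of $(\bar x,\bar u)$ in) $\gph\partial h$, with $\Phi(\bar x,\bar v)=(\bar x,\bar u)$, and it respects the attentive conditions since $f$ is continuous: $g(x_k)\to g(\bar x)\iff h(x_k)\to h(\bar x)$ along any sequence $x_k\to\bar x$. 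Next I would show $\Phi$ carries $\Omega_g$ onto $\Omega_h$: Proposition~\ref{prop:sumC2gendif}(ii) applied to the smooth function $f$ shows $(x,v)\in\Omega_g\Rightarrow\Phi(x,v)\in\Omega_h$, and applying the same proposition to the smooth function $-f$ and the function $h$ gives the converse. Hence $(x,v)\in\Omega_g\iff\Phi(x,v)\in\Omega_h$.

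The analytic core is the identity from Proposition~\ref{prop:sumC2gendif}(i), valid at every $(x,v)\in\Omega_g$ near the reference point:
\[
\tfrac12 d^2 h(x|\nabla f(x)+v)=\tfrac12 q_{\nabla^2 f(x)}+\tfrac12 d^2 g(x|v).
\]
Combined with this I would invoke the standard stability fact for epi-limits: if $\varphi_k\xrightarrow{e}\varphi$ and $\psi_k\to\psi$ uniformly on bounded sets with all $\psi_k,\psi$ finite and continuous, then $\varphi_k+\psi_k\xrightarrow{e}\varphi+\psi$. Here $\psi_k$ will be $\pm\tfrac12 q_{\nabla^2 f(x_k)}$, which converges uniformly on bounded sets to $\pm\tfrac12 q_{\nabla^2 f(\bar x)}$ because $x_k\to\bar x$ and $\nabla^2 f$ is continuous. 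For the inclusion ``$\subseteq$'', take $q\in\qua h(\bar x|\bar u)$ with a witnessing sequence $(x_k,u_k)\xrightarrow{\Omega_h}(\bar x,\bar u)$, $h(x_k)\to h(\bar x)$, $\tfrac12 d^2 h(x_k|u_k)\xrightarrow{e}q$. Set $v_k:=u_k-\nabla f(x_k)$; then $(x_k,v_k)\xrightarrow{\Omega_g}(\bar x,\bar v)$ and $g(x_k)\to g(\bar x)$, and by the displayed identity and the stability fact, $\tfrac12 d^2 g(x_k|v_k)=\tfrac12 d^2 h(x_k|u_k)-\tfrac12 q_{\nabla^2 f(x_k)}\xrightarrow{e}q-\tfrac12 q_{\nabla^2 f(\bar x)}$. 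Since $q$ is a generalized quadratic form, so is $q-\tfrac12 q_{\nabla^2 f(\bar x)}$, hence it lies in $\qua g(\bar x|\bar v)$, i.e. $q\in\tfrac12 q_{\nabla^2 f(\bar x)}+\qua g(\bar x|\bar v)$.

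The reverse inclusion ``$\supseteq$'' is symmetric: start from $\tilde q\in\qua g(\bar x|\bar v)$ with witnessing $(x_k,v_k)\xrightarrow{\Omega_g}(\bar x,\bar v)$, $g(x_k)\to g(\bar x)$, push forward by $\Phi$ to $u_k:=\nabla f(x_k)+v_k$ so that $(x_k,u_k)\xrightarrow{\Omega_h}(\bar x,\bar u)$ and $h(x_k)\to h(\bar x)$, and use the displayed identity together with the stability fact (now with $\psi_k=+\tfrac12 q_{\nabla^2 f(x_k)}$) to get $\tfrac12 d^2 h(x_k|u_k)\xrightarrow{e}\tfrac12 q_{\nabla^2 f(\bar x)}+\tilde q$, a generalized quadratic form, hence in $\qua h(\bar x|\bar u)$. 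This proves the equality. For the ``in particular'' statement, apply the sum rule with $g\equiv 0$: there $\Omega_0=\R^n\times\{0\}$ and $d^2 0(x|0)\equiv 0$, so $\qua 0(\bar x|0)=\{q_{[0]}\}=\{0\}$, whence $\qua f(\bar x|\nabla f(\bar x))=\{\tfrac12 q_{\nabla^2 f(\bar x)}\}$; the remaining identity $\tfrac12 d^2 f(\bar x|\nabla f(\bar x))=\tfrac12 q_{\nabla^2 f(\bar x)}$ is exactly \eqref{eq:d2twicediff}.

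The step I expect to be the main obstacle is the epi-convergence bookkeeping: justifying that adding the smoothly convergent quadratics $\pm\tfrac12 q_{\nabla^2 f(x_k)}$ preserves epi-limits (the cited stability of epi-convergence under uniform-on-bounded-sets perturbations by continuous functions), and verifying that the attentive conditions $f(x_k)\to f(\bar x)$, $g(x_k)\to g(\bar x)$, $h(x_k)\to h(\bar x)$ are mutually equivalent along the relevant sequences. Both become routine once the $\mathcal{C}^2$-regularity of $f$ (continuity of $f$, $\nabla f$, $\nabla^2 f$ near $\bar x$) and the split formula of Proposition~\ref{prop:sumC2gendif}(i) are in hand; everything else — the subdifferential sum rule, the homeomorphism $\Phi$, and the closedness of generalized quadratic forms under subtraction of $q_{\nabla^2 f(\bar x)}$ — is elementary.
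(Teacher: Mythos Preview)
Your proof is correct. The paper itself does not supply a proof of this proposition; it merely cites \cite[Theorem~6.6]{gtdquad}, so there is no in-paper argument to compare against. Your approach---transferring between $\gph\partial g$ and $\gph\partial h$ via the $\mathcal{C}^1$-diffeomorphism $\Phi(x,v)=(x,\nabla f(x)+v)$, invoking Proposition~\ref{prop:sumC2gendif} in both directions (with $f$ and with $-f$) to match the sets $\Omega_g$ and $\Omega_h$, and then using the stability of epi-convergence under addition of a continuously and uniformly-on-bounded-sets convergent sequence of quadratics---is the natural route and is precisely what one expects the cited proof in \cite{gtdquad} to do. All the technical points you flag (the subdifferential sum rule for strictly differentiable $f$, the equivalence of the $f$-attentive conditions for $g$ and $h$ along $x_k\to\bar x$, the closure of generalized quadratic forms under subtraction of $q_{\nabla^2 f(\bar x)}$, and the epi-stability lemma) are standard and hold as stated. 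The specialization to $g\equiv 0$ for the ``in particular'' clause is also clean.
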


Finally, we present here one of the principal results of \cite[Theorem~6.11]{gtdquad}, which lays the foundation of the characterizations of variational of variational convexity and tilt stability established below by showing the {\em existence} of quadratic bundles used in  these characterizations for prox-regular functions.

\begin{Proposition}\label{prop:quadproxne} Let $f: \R^n \to \overline{\R}$ be prox-regular at $\bar{x}\in\dom f$ for $\bar{v} \in \partial f(\bar{x})$. Then the quadratic bundle $\qua f(\bar{x}|\bar{v})$ is nonempty.
\end{Proposition}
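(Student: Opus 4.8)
The plan is to exhibit one sequence of primal--dual pairs $(x_k,v_k)\to(\bar x,\bar v)$ lying in the set $\Omega_f$ of \eqref{Qf} with $f(x_k)\to f(\bar x)$; once such a sequence is produced, Remark~\ref{prop:extractepi} immediately yields a member of $\qua f(\bar x|\bar v)$. The sequence will be manufactured by transferring the problem to the Moreau envelope $e_\lambda f$, which is $\mathcal C^{1,1}$ and hence twice differentiable on a set of full measure. First I would reduce to the case where $f$ is prox-bounded: since $f$ is prox-regular at $\bar x$ for $\bar v$ it is variationally $s$-convex there for some $s\le 0$, so Proposition~\ref{theo:svarchar} with $(x,v)=(\bar x,\bar v)$ provides a neighborhood $U$ of $\bar x$ on which $f(x')\ge f(\bar x)+\la\bar v,x'-\bar x\ra+\tfrac s2\|x'-\bar x\|^2$, i.e.\ $f$ is minorized by a quadratic near $\bar x$; passing to $\tilde f:=f+\delta_{\overline B(\bar x,\rho)}$ with $\overline B(\bar x,\rho)\subset U$ produces an l.s.c.\ function that is bounded below, hence prox-bounded on $\R^n$, is still $r$-level prox-regular at $\bar x$ for $\bar v$, and coincides with $f$ (together with all subgradients near $\bar x$) because $\bar x$ is interior to $\overline B(\bar x,\rho)$; by the locality of every ingredient in Definition~\ref{defi:quadbund} one then has $\qua\tilde f(\bar x|\bar v)=\qua f(\bar x|\bar v)$, so I may assume $f$ is prox-bounded, fix $r>0$ with $f$ being $r$-level prox-regular at $\bar x$ for $\bar v$, and fix any $\lambda\in(0,1/r)$.

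By Proposition~\ref{prop:prox-mor-pr} the Moreau envelope $e_\lambda f$ is of class $\mathcal C^{1,1}$ on a neighborhood $W$ of $\bar z:=\bar x+\lambda\bar v$, so $\nabla e_\lambda f$ is Lipschitz on $W$; by Rademacher's theorem $\nabla e_\lambda f$ is differentiable at almost every point of $W$, which for a $\mathcal C^{1,1}$ function means that $e_\lambda f$ is twice differentiable there (equivalently one may invoke Alexandrov's theorem, since $\tfrac1{2\lambda}\|\cdot\|^2-e_\lambda f$ is convex). This set of twice-differentiability points has full measure in $W$, hence is dense, so I can choose $z_k\to\bar z$ in $W$ such that $e_\lambda f$ is twice differentiable at each $z_k$.

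Finally, set $v_k:=\nabla e_\lambda f(z_k)$ and $x_k:=z_k-\lambda v_k$. By Proposition~\ref{lem:zkvkxk} this gives $v_k\in\partial f(x_k)$ for all large $k$, together with $(x_k,v_k)\to(\bar x,\bar v)$ and $f(x_k)\to f(\bar x)$. Taking the $f$-attentive $\ve$-localization $T_\ve$ furnished by Proposition~\ref{cor:gtdtwicediff}, the convergences above put $(x_k,v_k)\in\gph T_\ve$ for large $k$ via \eqref{eq:fattentive}, while $x_k+\lambda v_k=z_k$ is a twice-differentiability point of $e_\lambda f$; hence Proposition~\ref{cor:gtdtwicediff} shows that $f$ is generalized twice differentiable at $x_k$ for $v_k$, i.e.\ $(x_k,v_k)\in\Omega_f$. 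Thus $(x_k,v_k)\xrightarrow{\Omega_f}(\bar x,\bar v)$ with $f(x_k)\to f(\bar x)$, so by Remark~\ref{prop:extractepi}(i) a subsequence of the generalized quadratic forms $q_k:=\tfrac12 d^2 f(x_k|v_k)$ epi-converges to some $q$, which by Remark~\ref{prop:extractepi}(ii) is a generalized quadratic form; by Definition~\ref{defi:quadbund}, $q\in\qua f(\bar x|\bar v)$, proving nonemptiness. I expect the only genuinely nontrivial input to be the equivalence in Proposition~\ref{cor:gtdtwicediff} (and its companion facts from \cite{gtdquad}), which lets twice differentiability of the smooth envelope be pulled back to generalized twice differentiability of the nonsmooth $f$, and which I would use as a black box; everything else --- the reduction to prox-boundedness, a.e.\ differentiability of a Lipschitz map, and the automatic existence and quadratic structure of the epi-limit recorded in Remark~\ref{prop:extractepi} --- is routine, so the real work is organizational: aligning Propositions~\ref{prop:prox-mor-pr}, \ref{lem:zkvkxk} and \ref{cor:gtdtwicediff} along one well-chosen sequence $z_k\to\bar z$.
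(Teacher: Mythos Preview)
Your proposal is correct. The paper does not give its own proof of this proposition---it simply imports it from the companion paper \cite[Theorem~6.11]{gtdquad}---but your argument is precisely the one suggested by the machinery quoted here: pick twice-differentiability points $z_k\to\bar z$ of the $\mathcal C^{1,1}$ envelope $e_\lambda f$ (Rademacher/Alexandrov), transfer them via Propositions~\ref{lem:zkvkxk} and \ref{cor:gtdtwicediff} to pairs $(x_k,v_k)\in\Omega_f$ with $f$-attentive convergence, and then invoke Remark~\ref{prop:extractepi}; this is exactly the sequence of moves the paper itself executes inside the proof of Theorem~\ref{theo:varcharquad}, so your reconstruction matches the intended route.
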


\section{Variational $s$-Convexity via Quadratic Bundles}\label{sec:charvar}

The main result of this section establishes a primal-dual characterization of variational $s$-convexity of {\em prox-regular} functions $f\colon\R^n\to\oR$ via quadratic bundles {\em without assuming subdifferential continuity}. We'll see in the proof below that for this broad class of extended-valued functions, it is sufficient to consider the case of $s>0$, i.e., to confine ourselves to characterizing {\em strong variational convexity}. Note that the subdifferential continuity requirement has been recently dismissed by Gfrerer \cite{gfrerer2024} in {\em dual} characterizations of strong variational convexity with the usage of $f$-attentive modification of Mordukhovich's second-order subdifferentials in the previously known results. Now we develop novel {\em pointwise primal-dual} characterizations of this property by utilizing quadratic bundles; see also Remark~\ref{rem:rock}(ii). Our approach is based on characterizing first the property of {\em strong $s$-convexity} for functions of class ${\cal C}^{1,1}$ via Hessian bundles and then propagating this device to quadratic bundle characterizations of {\em variational $s$-convexity} of prox-regular functions via the reduction to Moreau envelopes of class ${\cal C}^{1,1}$.\vspace*{0.05in}

Recall that the \textit{Hessian bundle} of a function $f\colon\R^n\to\oR$ at a point $\bar{x} \in \dom f$ is defined by
\begin{equation*}
\overline{\nabla}^2 f(\bar{x}) :=\big\{ H\in\R^{n\times n}\;\big|\;\exists\,x_k \to \bar{x}\;\text{ s.t. }\;f \;\text{ is twice differentiable at }\;x_k\;\text{ and }\;\nabla^2 f(x_k) \to H \big\}.
\end{equation*}
It follows from \cite[Theorem~13.52]{Rockafellar98} that Hessian bundles are nonempty for functions of class ${\cal C}^{1,1}$. The next result of its own interest establishes relationships between strong convexity of ${\cal C}^{1,1}$ functions and positive-definiteness of Hessian bundles with the {\em modulus interplay}.

\begin{Theorem}\label{prop:conv}
Assume that $f: \R^n \to\oR$ is of class $\mathcal{C}^{1,1}$ around $\bar{x}$ and consider the following assertions:
\begin{itemize} 
\item[\rm\textbf{(i)}] There exists a neighborhood $U$ of $\bar{x}$ such that $f$ is strongly convex on $U$ with modulus $s > 0$.

\item[\rm\textbf{(ii)}] Every $H\in \bnab^2 f(\bar{x})$ is positive-definite with modulus $\mu > 0$.

\item[\rm\textbf{(iii)}] There exists a neighborhood $U$ of $\bar{x}$ such that for any $x \in U$, every $H \in \bnab^2 f(x)$ is positive-definite with modulus $s > 0$.
\end{itemize}
Then we have that implication {\rm \textbf{(i)}}$\Longrightarrow$ {\rm\textbf{(ii)}} holds when $\mu=s$, that implication {\rm \textbf{(ii)}}$\Longrightarrow$ {\rm \textbf{(iii)}} holds when $\mu >s$, while implication
{\rm\textbf{(iii)}}$\Longrightarrow$ {\rm \textbf{(i)}} is always fulfilled.
\end{Theorem}
\begin{proof} Assuming in \textbf{(i)} that $f$ is strongly convex with modulus $s>0$ and is of class $\mathcal{C}^{1,1}$ in a neighborhood $U$ of $\bar{x}$ gives us by \cite[Theorem~4.1.1]{urruty} the estimate
$$
f(x') \ge f(x) + \la \nabla f(x), x' - x \ra + \frac{s}{2}\|x' - x\|^2\;\mbox{ for all }\;x,x\in U.$$
Then it follows from Proposition~\ref{prop:strongchar} that
\begin{equation}\label{eq:d2posdefk}
d^2 f(x|\nabla f(x))(w) \ge s\|w\|^2\;\mbox{ whenever }\;x\in U,\;w\in\R^n.
\end{equation}
Pick any $H \in \overline{\nabla}^2 f(\bar{x})$ and find $x_k \to \bar{x}$ such that $f$ is twice differentiable at $x_k$ and $\nabla^2 f(x_k) \to H$ as $k\to\infty$. We get $d^2 f(x_k|\nabla f(x_k)) = q_{\nabla^2 f(x_k)}$ and thus deduce from \eqref{eq:d2posdefk} with $x=x_k$ that
$$
\la w, \nabla^2 f(x_k) w \ra \ge s\|w\|^2\;\mbox{ for all }\;w\in\R^n\;\mbox{ and large }\;k.
$$
Passing to the limits as $k \to\infty$ brings us to 
$$
\la w, H w\ra \ge s \|w\|^2\;\mbox{ whenever }\;w\in\R^n,
$$
which therefore justifies \textbf{(ii)} is satisfied for $\mu =s$.\vspace*{0.05in}

Suppose now that \textbf{(ii)} holds with $\mu >s$, i.e.,
\begin{equation*}
\la w, Hw \ra \ge \mu\|w\|^2\;\mbox{ for all }\;H \in \overline{\nabla}^2 f(\bar{x}) \;\mbox{ and }\;w\in\R^n.
\end{equation*}
The latter easily implies that
\begin{equation}\label{eq:hesposdef}
\la w, Mw \ra \ge \mu\|w\|^2\;\mbox{ for all }\; M \in \co\big[\overline{\nabla}^2 f(\bar{x})\big],\quad w\in\R^n. 
\end{equation}
since $\overline{\nabla}^2 f(x)$ consists of only symmetric matrices by \cite[Theorem~13.52]{Rockafellar98}; the same holds  for the set $\co[\overline{\nabla}^2 f(x)]$ if $x$ is sufficiently close to $\bar{x}$. Let us show that there is a neighborhood $U$ of $\bar{x}$ such that
\begin{equation}\label{eq:oscHes}
\mbox{ for all }\;x \in U,\; M_1 \in \co \left[\overline{\nabla}^2 f(x)\right]\;\mbox{ there is }\; M_0 \in \co \left[\overline{\nabla}^2 f(\bar{x})\right]\;\text{ with }\;\sigma(M_1 - M_0) < \mu -s, 
\end{equation}
where $\sigma(A)$ denotes the \textit{spectral norm} of a symmetric matrix $A$, i.e., the greatest among the absolute values of the eigenvalues of $A$. Suppose on the contrary that there there is no such $U$. It follows from the convexity of the set $\co[ \overline{\nabla}^2 f(\bar{x})] + \varepsilon\mathbb{B}$ for any $\varepsilon > 0$ and the unit norm $\mathbb{B}$ associated with the norm $\sigma$ that there exists a sequence $x_k \to \bar{x}$ such that for each $k$ we can find $M_k \in\overline{\nabla}^2 f(x_k)$ satisfying $\sigma (M_k - M) \ge \mu -s$ whenever $M \in \co[\overline{\nabla}^2 f(\bar{x})]$. Since $f$ is of class $\mathcal{C}^{1,1}$ around $\bar{x}$, the multifunction $x\mapsto\co[ \overline{\nabla}^2 f]$ is locally bounded around $\bar{x}$, which enables us to extract a subsequence of $\{M_k\}$ converging to some $M_0$. It follows therefore that $\sigma (M_0 - M) \ge \mu - s> 0$ for all $M \in \co[ \overline{\nabla}^2 f(\bar{x})]$, i.e., $M_0 \notin\co[\overline{\nabla}^2 f(\bar{x})]$.

We now show that $M_0 \in\overline{\nabla}^2 f(\bar{x})$ to get a contradiction. Indeed, the inclusion $M_k \in  \overline{\nabla}^2 f(x_k)$ for each $k\in\N$ allows us to find a sequence $\{x_k^m\}_m$ such that $x_k^m \to x_k$ and $\nabla^2 f(x_k^m) \to M_k$ as $m\to\infty$, while $f$ is twice differentiable at $x_k^m$. This yields the existence of $m(k) \in \N$  for each $k$ satisfying the condition
$$
\sigma( \nabla^2 f(x_k^m) - M_k) < \dfrac{1}{k}\;\mbox{ whenever }\;m > m(k).
$$
Denote $m_k := \max \{k, m(k)\}$ and $y_k := x_k^{m_k}$ for each $k\in\N$ and then get that $y_k \to \bar{x}$, that $f$ is twice differentiable at $y_k$, and that $\nabla^2 f(y_k) \to M_0$ as $k\to\infty$. This tells us that $M_0 \in   \overline{\nabla}^2 f(\bar{x})$, and thus the existence of a neighborhood $U$ of $\bar{x}$ satisfying \eqref{eq:oscHes} is justified.

Our next step is to check that, within the obtained neighborhood $U$, we have the estimate
\begin{equation}\label{eq:posdefheslan}
\la w, Mw \ra \ge s\|w\|^2\;\mbox{ for all }\;x \in U,\; w \in \R^n,\;\mbox{ and }\;M \in \co\big[\overline{\nabla}^2 f(x)\big].
\end{equation} 
To proceed, pick $x \in U$, $M \in \co \left[\overline{\nabla}^2 f(x)\right]$ and find by \eqref{eq:oscHes} a matrix $M_0 \in \co[\overline{\nabla}^2 f(\bar{x})]$ with $\sigma (M - M_0) < \mu - s$. This means that all the eigenvalues of $M - M_0$ are greater than $-(\mu -s)$, which yields
\begin{equation}\label{eq:hesposdef2}
\la w, (M - M_0) w \ra \ge -(\mu -s)\|w\|^2,\quad w\in\R^n.
\end{equation}
Since $M_0 \in \co[\overline{\nabla}^2 f(\bar{x})]$, it follows from \eqref{eq:hesposdef} that
\begin{equation}\label{eq:hesposdef3}
\la w, M_0w \ra \ge \mu\|w\|^2\;\mbox{ for all }\;w\in\R^n.
\end{equation}
Combining \eqref{eq:hesposdef2} and \eqref{eq:hesposdef3} ensures that
\begin{equation*}
\la w, Mw \ra \ge \mu \|w\|^2 + (-\mu +s)\|w\|^2 =s\|w\|^2,\quad w\in\R^n,
\end{equation*}
which verifies that $U$ is indeed a neighborhood of $\bar{x}$ that satisfies \eqref{eq:posdefheslan}. Observing that symmetric positive-definite matrices with modulus $s>0$ forms a convex set justifies the fulfillment of \textbf{(iii)}.

Assume finally that \textbf{(iii)} is satisfied and show that \textbf{(i)} always holds. Indeed, fixing any $x, u \in U$ and applying \cite[Theorem~2.3]{urrutyhessian} give us $x_u \in [x,u] \subset U$ and $M \in \co[\overline{\nabla}^2 f(x_u)]$ such that
$$
f(x) = f(u) + \la \nabla f(u), x - u \ra + \dfrac{1}{2}\la M(b - a),b - a \ra.
$$
By \textbf{(iii)} we have the estimate in \eqref{eq:posdefheslan}, which implies in turn that
$$
f(x) \ge f(u) + \la \nabla f(u), x - u \ra + \dfrac{s}{2}\|x - u\|^2. 
$$
It follows from \cite[Theorem~4.1.1]{urruty} that $f$ is strongly convex with modulus $s$ on $U$, which verifies \textbf{(i)} and thus completes the proof of the theorem.
\end{proof}

Preparing to prove the main result below, we need---besides Theorem~\ref{prop:conv} above---the following two technical lemmas. The first one is taken from \cite[Lemma~5.7]{gtdquad}.

\begin{Lemma}\label{lem:posdef} Let $L$ be a subspace of $\R^n$, let  $A: \R^n \to \R^n$ be a self-adjoint linear operator, and let $\sigma \in \R$. Impose on $L$ the eigenvalue lower-bound
$$
\la w, Aw \ra \ge \sigma \|w\|^2\;\mbox{ for all }\;w \in L.
$$
Then there exists a self-adjoint linear operator $B: \R^n \to \R^n$ such that the  eigenvalue lower-bound of $A$ on $L$ is extended to the same property for $B$ on the entire space, i.e.,
$$\la w,Bw \ra = \la w, Aw \ra\;\mbox{ for all }\;w\in L,
$$
$$\la w, Bw \ra \ge \sigma \|w\|^2 \;\mbox{ for all }\;w\in\R^n.
$$
\end{Lemma}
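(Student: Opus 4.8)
The plan is to give an explicit construction of $B$ by ``filling in'' the quadratic form on the orthogonal complement of $L$. First I would let $P\colon\R^n\to\R^n$ denote the orthogonal projection onto $L$, so that $P$ is self-adjoint, $P^2=P$, and every $w\in\R^n$ decomposes uniquely as $w=Pw+(I-P)w$ with $Pw\in L$ and $(I-P)w\in L^\perp$. Then I would set
$$
B := PAP + \sigma(I-P),
$$
which is self-adjoint since $A$, $P$, and $I-P$ are all self-adjoint and $(PAP)^\ast = P^\ast A^\ast P^\ast = PAP$.

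Next I would verify the two required properties by a direct computation using this decomposition. For $w\in L$ one has $Pw=w$ and $(I-P)w=0$, hence $\la w,Bw\ra = \la w,PAPw\ra + \sigma\la w,(I-P)w\ra = \la Pw,APw\ra = \la w,Aw\ra$, which gives the first identity. For an arbitrary $w\in\R^n$, writing $u:=Pw\in L$ and $z:=(I-P)w\in L^\perp$ and using $\|w\|^2=\|u\|^2+\|z\|^2$ together with $\la w,(I-P)w\ra = \|w\|^2-\|Pw\|^2 = \|z\|^2$, I obtain $\la w,Bw\ra = \la u,Au\ra + \sigma\|z\|^2 \ge \sigma\|u\|^2 + \sigma\|z\|^2 = \sigma\|w\|^2$, where the inequality is exactly the hypothesized eigenvalue lower bound of $A$ on $L$ applied to $u\in L$. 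This establishes the second estimate and completes the argument.

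There is essentially no obstacle here: the construction is elementary linear algebra, and the only point requiring (minimal) care is that the cross terms between $L$ and $L^\perp$ vanish — which is precisely why the \emph{orthogonal} projection, rather than an arbitrary projection onto $L$, must be used, and why $PAP$ (rather than, say, $AP$) appears so that $B$ remains self-adjoint. One could equally well replace $\sigma(I-P)$ by $t(I-P)$ for any $t\ge\sigma$; the choice $t=\sigma$ is simply the most economical.
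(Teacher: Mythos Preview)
Your construction $B=PAP+\sigma(I-P)$ is correct, and both verifications go through exactly as you wrote them; in particular the self-adjointness and idempotence of $P$ handle the cross terms cleanly. The paper itself does not supply a proof of this lemma --- it simply imports the statement from \cite[Lemma~5.7]{gtdquad} --- so there is no in-paper argument to compare against; your explicit formula is a perfectly acceptable and self-contained proof.
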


The next lemma reflects the stability of quadratic estimates under epi-limits.

\begin{Lemma}\label{lem:d2posdef} Let $f_k : \R^n \to \overline{\R}$ be a sequence of functions that are l.s.c.\ and positively homogeneous of degree $2$. Assume that $\eliminf\limits_{k \to \infty} f_k = f$ for a function $f: \R^n \to \overline{\R}$ satisfying
\begin{equation}\label{eq:bdtq}
f(w) \ge \mu\|w\|^2\;\;\mbox{ for all }\;w\in\R^n 
\end{equation}
with some $\mu \in \R$. Then for any $\delta > 0$ and $k\in\N$ sufficiently large, we have  
\begin{equation}\label{eq:bdtd2}
f_k(w) \ge (\mu - \delta)\|w\|^2\;\mbox{ whenever }\;w\in\R^n.
 \end{equation}
\end{Lemma}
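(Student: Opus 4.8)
The plan is a contradiction argument built on two reductions: normalizing to the unit sphere via positive homogeneity, and extracting a convergent subsequence to exploit the defining property of the lower epigraphical limit.

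First I would reduce the desired inequality \eqref{eq:bdtd2} to a statement on the unit sphere. Since each $f_k$ is positively homogeneous of degree $2$ (so in particular $f_k(0) = 0$), the bound $f_k(w) \ge (\mu - \delta)\|w\|^2$ for all $w \in \R^n$ is equivalent to $f_k(w) \ge \mu - \delta$ for all $w$ with $\|w\| = 1$: for $w \ne 0$ one divides by $\|w\|^2$ and uses $f_k(w) = \|w\|^2 f_k(w/\|w\|)$, while $w = 0$ is trivial. So it suffices to fix $\delta > 0$ and find $N \in \N$ such that $f_k(w) \ge \mu - \delta$ whenever $k \ge N$ and $\|w\| = 1$.

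Next I would negate this. If it fails, there are a subsequence $\{k_j\}$ and unit vectors $w_j$ with $f_{k_j}(w_j) < \mu - \delta$ for all $j$. By compactness of the unit sphere of $\R^n$, after passing to a further subsequence (not relabeled) we may assume $w_j \to \bar w$ with $\|\bar w\| = 1$. I would then splice these points into a full sequence $\{x_k\}$ converging to $\bar w$, e.g. $x_{k_j} := w_j$ and $x_k := \bar w$ for the remaining indices. The heart of the argument is to apply the first formula of Proposition~\ref{prop:epichar} to the lower epi-limit $f = \eliminf_k f_k$: it says that $f(\bar w)$ is the least value of $\liminf_k f_k(x_k')$ over all sequences $x_k' \to \bar w$, so in particular $\liminf_k f_k(x_k) \ge f(\bar w) \ge \mu\|\bar w\|^2 = \mu$, where the middle inequality is hypothesis \eqref{eq:bdtq}. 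On the other hand, because a $\liminf$ over a subsequence is at least the $\liminf$ over the whole sequence,
$$
\liminf_{k\to\infty} f_k(x_k) \le \liminf_{j\to\infty} f_{k_j}(w_j) \le \mu - \delta .
$$
Chaining these inequalities gives $\mu \le \mu - \delta$, contradicting $\delta > 0$; hence the sphere bound holds for all large $k$, and \eqref{eq:bdtd2} follows.

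This lemma is essentially routine; the only points needing attention are getting the direction of the lower-epi-limit inequality right — it is precisely the statement that $\liminf_k f_k(x_k) \ge f(\bar w)$ along \emph{every} recovery sequence, which is exactly what a pointwise lower bound on $f$ supplies — and the homogeneity normalization that converts a bound over all of $\R^n$ into a bound over the compact sphere. (Lower semicontinuity of the $f_k$ is stated for context but is not actually used in this argument.)
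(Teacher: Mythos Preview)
Your proof is correct and follows essentially the same approach as the paper: reduce to the unit sphere by positive homogeneity of degree $2$, then exploit compactness together with the lower-epi-limit inequality $\liminf_k f_k(x_k)\ge f(\bar w)$ for every $x_k\to\bar w$. The only cosmetic difference is that the paper packages the sphere step into a citation of \cite[Proposition~7.29(a)]{Rockafellar98} (yielding $\liminf_k \min_{\|w\|=1} f_k(w)\ge \min_{\|w\|=1} f(w)\ge\mu$ directly), whereas you unwind that argument by hand via contradiction and subsequence extraction; your version is thus self-contained while the paper's is shorter.
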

\begin{proof} Since $f = \eliminf\limits_{k \to \infty} f_k$ and all $f_k$ are l.s.c.\ and positively homogeneous of degree $2$, it follows that $f$ is also l.s.c.\ and positively homogeneous of the same degree. Since both sides of \eqref{eq:bdtd2} are positively homogeneous of degree $2$, we only need to show that \eqref{eq:bdtd2} holds for all $w$ such that $\|w\| = 1$ whenever $k$ is sufficiently large. The compactness of the set $\{w\in\R^n \mid \|w\| = 1\}$  and \cite[Proposition~7.29(a)]{Rockafellar98} combined with \eqref{eq:bdtq} bring us to the estimates
\begin{equation*}
\liminf\limits_{k \to \infty} \left(\min\limits_{\|w\| = 1} f_k(w) \right) \ge  \min\limits_{\|w\| = 1} f(w) \ge \mu.
\end{equation*}
Hence we arrive at $\min\limits_{\|w\| = 1}f_k(w) \ge \mu - \delta$, which obviously implies \eqref{eq:bdtd2} and thus completes the proof.
\end{proof}

Now we are ready to establish the pointbased primal-dual characterization for variational $s$-convexity via the revised quadratic bundles for prox-regular functions. 

\begin{Theorem}\label{theo:varcharquad} Let $f: \R^n \to \overline{\R}$ be an l.s.c.\ function with $\bar{x} \in \dom f$, and let $s\in\R$. Suppose that $f$ is prox-regular at $\bar{x}$ for $\bar{v} \in \partial f(\bar{x})$ and consider the following assertions:
\begin{itemize}
\item[\bf\textbf{(i)}] $f$ is variationally $s$-convex at $\bar{x}$ for $\bar{v}$.

\item[\bf\textbf{(ii)}] The quadratic bundle $\qua f(\bar{x}|\bar{v})$  satisfies the uniform eigenvalue lower-bound
\begin{equation}\label{quad_posi}
q(w) \ge \mu\|w\|^2\;\mbox{ for all }\;q \in \qua f(\bar{x}|\bar{v}),\quad w \in \R^n. 
\end{equation}
\end{itemize} 
Then implication {\rm\textbf{(i)} $\Longrightarrow$\textbf{(ii)}} holds with $\mu = \dfrac{s}{2}$, while the reverse implication {\rm \textbf{(ii)}$\Longrightarrow$ \textbf{(i)}} is satisfied with any number $\mu > \dfrac{s}{2}$.
\end{Theorem}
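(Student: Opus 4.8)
The plan is to reduce to the case $s>0$ and then transfer everything to the Moreau envelope $e_\lambda f$, which is of class $\mathcal{C}^{1,1}$ around $\bar z:=\bar x+\lambda\bar v$ by Proposition~\ref{prop:prox-mor-pr} (throughout one may assume, since prox-regularity is local, that $f$ is also prox-bounded on $\R^n$ and $r$-level prox-regular at $\bar x$ for $\bar v$). The route is: convert the bundle estimate \eqref{quad_posi} for $f$ into an eigenvalue lower bound for the Hessian bundle $\bnab^2 e_\lambda f(\bar z)$ via the envelope identity \eqref{eq:envd^22}; apply Theorem~\ref{prop:conv} to deduce strong convexity of $e_\lambda f$ near $\bar z$; and recover variational convexity of $f$ from Proposition~\ref{theo:svarchar}. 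To see that $s>0$ suffices, fix $t\ge0$ with $s+t>0$, set $g:=f+\tfrac t2\|\cdot\|^2$ and $\bar w:=\bar v+t\bar x\in\partial g(\bar x)$; then $g$ is prox-bounded and $r$-level prox-regular at $\bar x$ for $\bar w$. By Proposition~\ref{theo:svarchar} the extra quadratic is absorbed exactly (the $g$- and $f$-attentive localizations agree near $\bar x$), so $f$ is variationally $s$-convex at $(\bar x,\bar v)$ iff $g$ is variationally $(s+t)$-convex at $(\bar x,\bar w)$; and by Proposition~\ref{prop:sumrulequad}, $\qua g(\bar x|\bar w)=\tfrac t2 q_I+\qua f(\bar x|\bar v)$, so \eqref{quad_posi} holds for $f$ with modulus $\mu$ iff it holds for $g$ with modulus $\mu+\tfrac t2$. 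Since $\tfrac{s+t}2=\tfrac s2+\tfrac t2$, the general case follows from the case $s>0$, which I assume from now on.

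For \textbf{(i)}$\Rightarrow$\textbf{(ii)}, I would invoke Corollary~\ref{cor:d2prox} to get $\varepsilon>0$ and an $f$-attentive localization $T_\varepsilon$ with $d^2 f(x|v)(w)\ge s\|w\|^2$ for all $w$ and all $(x,v)\in\gph T_\varepsilon$. Given $q\in\qua f(\bar x|\bar v)$, choose $(x_k,v_k)\xrightarrow{\Omega_f}(\bar x,\bar v)$ with $f(x_k)\to f(\bar x)$ and $\tfrac12 d^2 f(x_k|v_k)\xrightarrow{e}q$; then $(x_k,v_k)\in\gph T_\varepsilon$ for large $k$, so $\tfrac12 d^2 f(x_k|v_k)(\cdot)\ge\tfrac s2\|\cdot\|^2$. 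Fixing $w$ and picking, via Proposition~\ref{prop:epichar}, a sequence $w_k\to w$ with $\liminf\tfrac12 d^2 f(x_k|v_k)(w_k)=q(w)$, we get $q(w)\ge\liminf\tfrac s2\|w_k\|^2=\tfrac s2\|w\|^2$; this is \eqref{quad_posi} with $\mu=s/2$.

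For \textbf{(ii)}$\Rightarrow$\textbf{(i)}, I would fix $\lambda\in(0,1/r)$ small enough that $\mu>\tfrac{s}{2(1-\lambda s)}$ (possible because $\mu>s/2$) and that the relevant $f$-attentive localization $T_\varepsilon$ obeys Propositions~\ref{prop:prox-mor-pr} and \ref{cor:gtdtwicediff}. Pick any $H\in\bnab^2 e_\lambda f(\bar z)$ together with $z_k\to\bar z$ such that $e_\lambda f$ is twice differentiable at $z_k$ and $\nabla^2 e_\lambda f(z_k)\to H$, and set $v_k:=\nabla e_\lambda f(z_k)$, $x_k:=z_k-\lambda v_k$. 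By Proposition~\ref{lem:zkvkxk} we have $v_k\in\partial f(x_k)$, $(x_k,v_k)\to(\bar x,\bar v)$ and $f(x_k)\to f(\bar x)$; by Proposition~\ref{cor:gtdtwicediff}, $f$ is generalized twice differentiable at $x_k$ for $v_k$ and $\tfrac12 q_{\nabla^2 e_\lambda f(z_k)}=e_\lambda[\tfrac12 d^2 f(x_k|v_k)]$ via \eqref{eq:envd^22}. Passing to a subsequence (Remark~\ref{prop:extractepi}), assume $\tfrac12 d^2 f(x_k|v_k)\xrightarrow{e}q\in\qua f(\bar x|\bar v)$, so $q\ge\mu\|\cdot\|^2$ by \textbf{(ii)}; then Lemma~\ref{lem:d2posdef} gives $\tfrac12 d^2 f(x_k|v_k)(\cdot)\ge(\mu-\delta)\|\cdot\|^2$ for any $\delta>0$ and large $k$, and since $e_\lambda[c\|\cdot\|^2]=\tfrac{c}{1+2\lambda c}\|\cdot\|^2$, taking Moreau envelopes and then letting $k\to\infty$ and $\delta\downarrow0$ yields $\la w,Hw\ra\ge\mu_1\|w\|^2$ with $\mu_1:=\tfrac{2\mu}{1+2\lambda\mu}>s$. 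As $e_\lambda f\in\mathcal{C}^{1,1}$ has $\bnab^2 e_\lambda f(\bar z)\neq\emptyset$, Theorem~\ref{prop:conv} produces a convex neighborhood $\widetilde U$ of $\bar z$ on which $e_\lambda f$ is strongly convex with modulus $s_1$ for some $s_1\in(s,\mu_1)$. Finally, for $(x,v)\in\gph T_\varepsilon$ and $x'\in B(\bar x,\varepsilon)$ with $\varepsilon$ small enough that $x+\lambda v,\,x'+\lambda v\in\widetilde U$, one has $P_\lambda f(x+\lambda v)=x$, hence $e_\lambda f(x+\lambda v)=f(x)+\tfrac\lambda2\|v\|^2$, together with the trivial bound $e_\lambda f(x'+\lambda v)\le f(x')+\tfrac\lambda2\|v\|^2$; feeding these into the strong-convexity inequality for $e_\lambda f$ at $x+\lambda v$ and $x'+\lambda v$ and simplifying gives $f(x')\ge f(x)+\la v,x'-x\ra+\tfrac{s_1}2\|x'-x\|^2$. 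By Proposition~\ref{theo:svarchar} this makes $f$ variationally $s_1$-convex at $(\bar x,\bar v)$, hence variationally $s$-convex since $s_1>s$.

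I expect the main obstacle to be the modulus bookkeeping in \textbf{(ii)}$\Rightarrow$\textbf{(i)}: the bundle bound $\mu$ is weakened to $\mu_1=\tfrac{2\mu}{1+2\lambda\mu}$ by the envelope identity, then to any $s_1<\mu_1$ by Theorem~\ref{prop:conv}, and the final step recovers only variational $s_1$-convexity, so one must verify that choosing $\lambda$ small forces $\mu_1>s$ exactly under the hypothesis $\mu>s/2$ (the elementary equivalence $\tfrac{2\mu}{1+2\lambda\mu}>s\Leftrightarrow\mu>\tfrac{s}{2(1-\lambda s)}$), while keeping the several $f$-attentive localizations and neighborhoods from Propositions~\ref{prop:prox-mor-pr}, \ref{lem:zkvkxk}, \ref{cor:gtdtwicediff} and \ref{theo:svarchar} simultaneously valid. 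The subsequence extraction that makes $\tfrac12 d^2 f(x_k|v_k)$ epi-converge (Remark~\ref{prop:extractepi}) and the cancellation $e_\lambda f(x+\lambda v)=f(x)+\tfrac\lambda2\|v\|^2$, which turns strong convexity of the envelope back into the variational estimate, are the technical core.
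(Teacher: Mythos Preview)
Your proposal is correct and follows the paper's overall route---reduce to $s>0$, transfer to the $\mathcal{C}^{1,1}$ Moreau envelope, and use Theorem~\ref{prop:conv} on its Hessian bundle---but the implementation of \textbf{(ii)}$\Rightarrow$\textbf{(i)} differs in two places.

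First, to bound $\la w,\nabla^2 e_\lambda f(z_k)w\ra$ from below, the paper writes $d^2 f(x_k|v_k)=\tfrac12 q_{M_k}+\delta_{L_k}$, invokes Lemma~\ref{lem:posdef} to extend the lower bound on $L_k$ to a positive-definite matrix $A_k$ on all of $\R^n$, and then estimates $e_\lambda[\tfrac12 q_{A_k}+\delta_{L_k}]$ explicitly via the generalized Cauchy--Schwarz inequality \eqref{eq:genCS}, ending with modulus $\tfrac{s/2+\mu}{1+\lambda(s/2+\mu)}$. You bypass both Lemma~\ref{lem:posdef} and \eqref{eq:genCS} by using only the monotonicity of Moreau envelopes together with the scalar identity $e_\lambda[c\|\cdot\|^2]=\tfrac{c}{1+2\lambda c}\|\cdot\|^2$, and then let $\delta\downarrow 0$ to obtain the (sharper) modulus $\mu_1=\tfrac{2\mu}{1+2\lambda\mu}$. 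This is more elementary; the price is that you must choose $\lambda$ small enough so that $\mu_1>s$, whereas the paper's estimate yields a modulus $>\tfrac{s}{1+\lambda s}$ for \emph{every} $\lambda\in(0,1/r)$.

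Second, to pass from strong convexity of $e_\lambda f$ back to variational $s$-convexity of $f$, the paper quotes the equivalence from \cite[Theorem~4.4]{varsuff1} (variational $s$-convexity of $f$ $\Leftrightarrow$ local $\tfrac{s}{1+s\lambda}$-strong convexity of $e_\lambda f$). You instead give a direct, self-contained argument via $P_\lambda f(x+\lambda v)=x$, the identity $e_\lambda f(x+\lambda v)=f(x)+\tfrac\lambda2\|v\|^2$, and the trivial upper bound $e_\lambda f(x'+\lambda v)\le f(x')+\tfrac\lambda2\|v\|^2$, plugging these into the strong-convexity inequality and applying Proposition~\ref{theo:svarchar}. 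This is essentially a reproof of one implication of that cited result, but it keeps everything internal to the present paper. Your \textbf{(i)}$\Rightarrow$\textbf{(ii)} argument and the reduction to $s>0$ match the paper's.
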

\begin{proof} 
It is sufficient to consider only in the case where $s > 0$. Indeed, assume that $f$ is prox-regular at $\bar{x}$ for $\bar{v}$ with level $r \ge 0$ and define the function $g: \R^n \to \overline{\R}$  by
\begin{equation}\label{eq:temp1}
g(x) := f(x) + r \|x - \bar{x}\|^2,\quad x\in\R^2.   
\end{equation}
It is straightforward to see from Definition~\ref{defi:varconv} that $g$ is variationally $r$-convex. Furthermore, it follows from \eqref{eq:temp1} and Proposition~\ref{prop:sumrulequad} that
$$
\qua g(\bar{x}|\bar{v}) = \qua f(\bar{x}|\bar{v}) + r \|\cdot\|^2.
$$
Therefore, $\qua f (\bar{x}|\bar{v})$ is uniformly positive-definite with modulus $\mu$ if and only if $\qua g(\bar{x}|\bar{v})$ is uniformly positive-definite with modulus $\mu + r$. This shows that it is perfectly viable to work with the shifted function $g$ in \eqref{eq:temp1} and then return to $f$.

Having the above in mind, we proceed with verifying \textbf{(i)}$\Longrightarrow$ \textbf{(ii)} for $s>0$. When $f$ is variationally $s$-convex at $\bar{x}$ for $\bar{v} \in \partial f(\bar{x})$ with modulus $s > 0$, it follows from Proposition~\ref{theo:svarchar} that there exist a neighborhood $U$ of $\bar{x}$ and an $f$-attentive localization $T_\ve$ of $\partial f$ around $(\bar{x},\bar{v})$ such that 
$$f(x) \ge f(u) + \la v, x - u \ra + \frac{s}{2}\|x - u\|^2\;\mbox{ for all }\;(u,v) \in \gph T_\ve\;\mbox{ and }\;x\in U. 
$$
If necessary, we can shrink $\gph T_\ve$ so that $U$ is a neighborhood of $u$ whenever $(u,v) \in \gph T_\ve$. Applying Proposition~\ref{prop:strongchar} for each $(u,v) \in \gph T_\ve$ tells us that
\begin{equation}\label{quad-d2}
d^2 f(u|v)(w) \ge s\|w\|^2\;\mbox{ for all }\;w\in\R^n.
\end{equation}
Pick any $q \in \qua f(\bar{x}|\bar{v})$ and find a sequence $(x_k, v_k) \xrightarrow{\gph \partial f} (\bar{x},\bar{v})$ such that $f(x_k) \to f(\bar{x})$ and $\dfrac{1}{2} d^2 f(x_k|v_k) \xrightarrow{e} q $ as $k\to\infty$, where the function $f$ is twice epi-differentiable at $x_k$ for $v_k$, $k\in\N$, and where $\dfrac{1}{2}d^2 f(x_k|v_k)$ is a generalized quadratic form. It follows from Proposition~\ref{prop:epichar} that for each $w \in \R^n$ there exists a sequence $w_k \to w$ along which
$$
\lim\limits_{k \to \infty} \frac{1}{2} d^2 f(x_k|v_k)(w_k) = q(w).
$$
Since $(x_k, v_k) \xrightarrow{\gph \partial f} (\bar{x},\bar{v})$ and $f(x_k) \to f(\bar{x})$ as $k\to\infty$, we get $(x_k,v_k) \in \gph T_\ve$ for all $k$ sufficient large. Applying \eqref{quad-d2} for such $k$ yields the estimates
$$
\frac{1}{2}d^2 f(x_k|v_k)(w_k) \ge \frac{s}{2}\|w_k\|^2.
$$
Passing to the limit as $k \to \infty$ brings us to
$$
q(w) = \lim\limits_{k \to \infty} \frac{1}{2} d^2 f(x_k|v_k)(w_k) \ge  \lim\limits_{k \to \infty} \frac{s}{2}\|w_k\|^2 = \frac{s}{2}\|w\|^2,
$$
which therefore justifies assertion \textbf{(ii)}.\vspace*{0.05in}

Given any $s>0$, next we verify implication \textbf{(ii)} $\Longrightarrow$\textbf{(i)}. Suppose without loss of generality that $f$ is $s$-level prox-regular at $\bar{x}$ for $\bar{v}$ and that the quadratic bundle $\qua f(\bar{x}|\bar{v})$ is uniformly positive-definite with modulus $\mu >\dfrac{s}{2}$. To prove that $f$ is variationally $s$-convex at $\bar{x}$ for $\bar{v}$, fix any $\lambda \in (0,1/r)$ and rely on \cite[Theorem~4.4]{varsuff1} telling us that the variational $s$-convexity of the function $f$ at $\bar{x}$ for $\bar{v}$ is equivalent to the local strong convexity of the associated Moreau envelope $e_\lambda f$ around $\bar{x} + \lambda \bar{v}$ with modulus $\dfrac{s}{1 +s\lambda}$. By Proposition~\ref{prop:prox-mor-pr}, the Moreau envelope $e_{\lambda} f$ is of class $\mathcal{C}^{1,1}$ around $\bar{x} + \lambda \bar{v}$. Appealing to the Hessian bundle characterizations of Theorem~\ref{prop:conv}, we intend to show that every $H \in \overline{\nabla}^2 e_\lambda f (\bar{x} + \lambda \bar{v})$ is positive-definite with some modulus $\sigma > \dfrac{s}{1 +s\lambda}$. Indeed, for any $H \in \overline{\nabla}^2 e_\lambda f (\bar{x} + \lambda \bar{v})$ there is a sequence $z_k \to \bar{x} + \lambda \bar{v}$ such that $e_\lambda f$ is twice differentiable at $z_k$ and $\nabla^2 e_{\lambda} f(z_k) \to H$. Define the sequences $\{v_k\}$ and $\{x_k\}$ by
$$
v_k := \nabla e_{\lambda}f (z_k), \quad x_k := z_k - \lambda v_k,\quad k\in\N, 
$$
and deduce from Proposition~\ref{lem:zkvkxk} that $(x_k,v_k) \xrightarrow{\gph \partial f} (\bar{x},\bar{v})$ and $f(x_k) \to f(\bar{x})$ as $k\to\infty$. It follows from Proposition~\ref{cor:gtdtwicediff} by the twice differentiability of $e_\lambda f$ at $z_k$ that $f$ is generalized twice differentiable at $x_k$ for $v_k$ whenever $k\in\N$. This ensures that $(x_k,v_k) \xrightarrow{\Omega_f} (\bar{x},\bar{v})$ and $f(x_k) \to f(\bar{x})$. Invoking now Remark~\ref{prop:extractepi}{\bf(ii)}, we can extract a subsequence $(x_{k_m},v_{k_m}) \xrightarrow{\Omega_f} (\bar{x},\bar{v})$ such that $f(x_{k_m}) \to f(\bar{x})$ and $\dfrac{1}{2} d^2 f(x_{k_m}|v_{k_m}) \xrightarrow{e} q$ as $m\to\infty$, where $q$ is a generalized quadratic form. Obviously, $q \in \qua f(\bar{x}|\bar{v})$ by Definition~\ref{defi:quadbund}, and we can assume without loss of generality that $\frac{1}{2} d^2f (x_k | v_k) \xrightarrow{e} q$ as $k\to\infty$.
Applying \eqref{quad_posi} yields $q(w) \ge \mu \|w\|^2$ for all $w\in \R^n$. Employing further Lemma~\ref{lem:d2posdef} and the condition $\mu > \dfrac{s}{2}$ tells us that
\begin{equation}\label{eq:d2posdef} 
d^2 f(x_k|v_k)(w) \ge \left(\dfrac{s}{2} + \mu \right)\|w\|^2,\quad w\in\R^n, 
\end{equation}
whenever $k$ is sufficiently large. Since $f$ is generalized twice differentiable at $x_k$ for $v_k$, there exists a symmetric matrix $M_k$ and a linear subspace $L_k$ satisfying
\begin{equation}\label{eq:d2prox1}
 d^2 f(x_k|v_k) = \dfrac{1}{2}q_{M_k} + \delta_{L_k} 
\end{equation}
for each large $k$. Combining the latter with \eqref{eq:d2posdef}, we arrive at
$$
\frac{1}{2}\la w, M_k w \ra \ge \left(\dfrac{s}{2} + \mu \right)\|w\|^2\;\mbox{ whenever }\;w\in L.
$$
Lemma~\ref{lem:posdef} allows us to find a positive-definite symmetric matrix $A_k$ such that  
\begin{equation}\label{eq:Ak}
\frac{1}{2} \la w, A_k w \ra \ge \left(\dfrac{s}{2} + \mu  \right) \|w\|^2,\quad w\in\R^n,
\end{equation}
and that $\la w, A_k w \ra = \la w, M_k w \ra$ for all $w \in L_k$. Using this together with \eqref{eq:d2prox1} gives us
\begin{equation}\label{eq:d2prox2}
d^2 f(x_k|v_k) = \dfrac{1}{2}q_{A_k} + \delta_{L_k}.  
\end{equation}
Representation \eqref{eq:d2twicediff} applies as $e_\lambda f$ is twice differentiable at $z_k$ for all large $k$. Then 
Proposition~\ref{cor:gtdtwicediff} and the relationship between second-order subderivatives of $f$ and $e_\lambda f$ in \eqref{eq:envd^22} combined with \eqref{eq:d2prox2} yield
$$
\begin{aligned}
\dfrac{1}{2}\la w,\nabla^2 e_\lambda f(z_k) w \ra = d^2\Big[\dfrac{1}{2}e_{\lambda}f\Big]\Big(z_k\Big|\dfrac{1}{2}v_k\Big)(w)
&=e_{\lambda}\Big(\dfrac{1}{2}d^2 f(x_k|v_k)\Big)(w)\\
&= \dfrac{1}{2}\inf\limits_{x \in \R^n}\Big\{\frac{1}{2}\la x,A_kx \ra + \delta_{L_k}(x) + \frac{1}{\lambda}\|x - w\|^2\Big\}\\
&\ge \dfrac{1}{2} \inf\limits_{ x \in \R^n}\Big\{\frac{1}{2}\la x,A_kx \ra  + \frac{1}{\lambda}\|x - w\|^2\Big\}.
\end{aligned}
$$
Note that for each $k$, the matrix $\dfrac{\lambda}{2}\left(A_k + \dfrac{2}{\lambda}I\right)$ is 
positive-definite, so we can employ \eqref{eq:genCS} to get
$$\begin{aligned}
& \frac{1}{2}\la x,A_kx \ra  + \frac{1}{\lambda}\|x - w\|^2 \\
= \ & \frac{1}{\lambda} \left\la x, \dfrac{\lambda}{2}\left(A_k + \dfrac{2}{\lambda}I\right)x \right\ra + \dfrac{1}{\lambda}\|w\|^2 - \dfrac{2}{\lambda}\la x, w \ra  \\
\ge \ & \frac{1}{\lambda} \left\la x, \dfrac{\lambda}{2}\left(A_k + \dfrac{2}{\lambda}I\right)x \right\ra + \dfrac{1}{\lambda}\|w\|^2 - \dfrac{1}{\lambda} \left\la x, \dfrac{\lambda}{2}\left(A_k + \dfrac{2}{\lambda}I\right)x \right\ra - \dfrac{1}{\lambda} \left\la w, \dfrac{2}{\lambda}  \left(A_k + \dfrac{2}{\lambda}I\right)^{-1}w  \right\ra  \\
= \ &\dfrac{1}{\lambda}\|w\|^2 - \dfrac{2}{\lambda^2} \left\la w, \left(A_k + \dfrac{2}{\lambda}I\right)^{-1}w  \right\ra.
\end{aligned} 
$$

By \eqref{eq:Ak}, all the eigenvalues of $A_k$ are at least $\left(s + 2\mu  \right)$. It then follows that all the (positive) eigenvalues of $\left( A_k + \dfrac{2}{\lambda}I \right)^{-1}$ are at most $\dfrac{1}{s + 2\mu + 2/\lambda}$. Hence for large $k$ and all $w$, we have
$$
\begin{aligned}
\la w, \nabla^2 e_\lambda f(z_k) w \ra &\ge - \dfrac{2}{\lambda^2} \left\la w, \left(A_k + \dfrac{2}{\lambda}I\right)^{-1}w  \right\ra + \dfrac{1}{\lambda}\|w\|^2 \\
&\ge -\frac{2}{\lambda^2} \cdot\dfrac{1}{s+ 2\mu +2/\lambda}\|w\|^2 + \frac{1}{\lambda}\|w\|^2\\
&= -\frac{1}{\lambda} \cdot\dfrac{1}{\lambda(s/2 + \mu) + 1}\|w\|^2 + \frac{1}{\lambda}\|w\|^2\\
&= \frac{1}{\lambda} \cdot \left( 1 - \dfrac{1}{\lambda\left(s/2 + \mu\right) + 1}\right)\|w\|^2\\
&= \frac{1}{\lambda} \cdot \frac{\lambda (s/2 + \mu)}{\lambda (s/2 + \mu) + 1}\|w\|^2 = \frac{s/2 + \mu}{\lambda (s/2 + \mu) + 1}\|w\|^2.
\end{aligned} $$
Passing the limit $k \to \infty$, it follows that
$$
\begin{aligned}
\la w, H w \ra \ge \frac{s/2 + \mu}{\lambda (s/2 + \mu) + 1}\|w\|^2\;\mbox{ for all }\;w\in\R^n\setminus\{0\}
\end{aligned}
$$
Since the function $h(x):= \dfrac{x}{\lambda x + 1}$ is strictly increasing on $(0,\infty)$ and since $s/2 + \mu > s$, the matrix $H$ is positive-definite with modulus $\dfrac{s/2 + \mu}{\lambda (s/2 + \mu) + 1} > \dfrac{s}{\lambda s + 1}$. This completes the proof of the theorem.
\end{proof}

\begin{Remark}\label{rem:rock}$\,$\rm
\begin{itemize}
\item[\textbf{(i)}] The suggested revision of the quadratic bundle definition is {\em necessary} in order to obtain the result as in Theorem \ref{theo:varcharquad}. Indeed, consider the function $f$ from 
Example~\ref{exam:quadandquads}, where $\qua_o f(\bar{x}|\bar{v})$ denotes the quadratic bundle of $f$ at $\bar{x}$ for $\bar{v}$ before the revision. It is obvious that not all the quadratic forms in $\qua_o f(0|0)$ are
positive-definite, while $f$ is strongly variationally convex at $0$ for $0$.

\item[\textbf{(ii)}] After completing the draft of this paper, we observed the modulus formula for strong convexity of prox-regular functions derived in Theorem~4.5 of the very recent Rockafellar's preprint \cite{roc24} via quadratic bundles. Although these results are closely interrelated, our proof is thoroughly different from \cite{roc24}. Indeed, instead of establishing and exploiting the connection between the Hessian bundle of Moreau envelope and the quadratic bundle of the original function, the proof of \cite{roc24} utilizes the connection between the modulus $\operatorname{cnv} f(\bar{x}|\bar{v})$ as in \eqref{def:cnv} and the Lipschitz modulus of the derivative of the Fenchel conjugate for variationally strongly convex functions. 
\end{itemize}
\end{Remark}

We conclude this section with the following {\em pointbased necessary condition} for variational (not strong) convexity of l.s.c.\ via the positive-semidefiniteness of quadratic bundles.

\begin{Corollary} Let $f: \R^n \to \overline{\R}$ be an l.s.c.\ function with $\bar{x} \in \dom f$. If $f$ is variationally convex at $\bar{x}$ for $\bar{v}\in\partial f(\ox)$, then the quadratic bundle $\qua f(\bar{x}|\bar{v})$ is positive-semidefinite in the sense that $q \ge 0$ for all $q\in \qua f(\bar{x}|\bar{v})$. In particular, if $f$ is $\mathcal{C}^1$-smooth around $\bar{x}$, then the local convexity of $f$ yields the positive semidefiniteness of $\qua f(\bar{x}|\nabla f(\bar{x}))$.
\end{Corollary}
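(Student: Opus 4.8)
The plan is to obtain the first assertion as a special case $s=0$ of Theorem~\ref{theo:varcharquad}, and then to reduce the $\mathcal{C}^1$ statement to that first assertion.

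For the first assertion, note that variational convexity of $f$ at $\bar{x}$ for $\bar{v}$ is, by Definition~\ref{defi:varconv}, exactly variational $s$-convexity with $s=0$; and the same definition records that this entails $f$ being $0$-level prox-regular at $\bar{x}$ for $\bar{v}$, hence prox-regular there. Thus the standing hypotheses of Theorem~\ref{theo:varcharquad} are satisfied, and implication \textbf{(i)}$\Longrightarrow$\textbf{(ii)} of that theorem applies with $s=0$, giving $\mu=s/2=0$. Consequently $q(w)\ge 0$ for every $q\in\qua f(\bar{x}|\bar{v})$ and every $w\in\R^n$, which is the asserted positive-semidefiniteness (and the bundle is nonempty by Proposition~\ref{prop:quadproxne}, so the statement is not vacuous). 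Observe that no converse is claimed here: the reverse implication in Theorem~\ref{theo:varcharquad} needs $\mu>s/2$, which for $s=0$ would force strict positive-definiteness, i.e.\ \emph{strong} variational convexity, so positive-semidefiniteness of $\qua f(\bar x|\bar v)$ is genuinely only a necessary condition for (plain) variational convexity.

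For the $\mathcal{C}^1$ case, first record that when $f$ is $\mathcal{C}^1$ around $\bar{x}$ one has $\partial f(\bar{x})=\{\nabla f(\bar{x})\}$, so necessarily $\bar{v}=\nabla f(\bar{x})$, and $f$ is subdifferentially continuous at $\bar{x}$ for $\nabla f(\bar{x})$ since both $f$ and $\nabla f$ are continuous. It then suffices to show that local convexity of $f$ implies $f$ is variationally convex at $\bar{x}$ for $\nabla f(\bar{x})$, after which the first assertion finishes the argument. Suppose $f$ is convex on a convex neighborhood $U$ of $\bar{x}$ and pick $\varepsilon>0$ with $B(\bar{x},\varepsilon)\subset U$. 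For any $(x,v)\in\gph\partial f$ with $\|(x,v)-(\bar{x},\nabla f(\bar{x}))\|<\varepsilon$ we have $x\in U$ and $v=\nabla f(x)$, while convexity of $f$ on $U$ yields $f(x')\ge f(x)+\langle v,x'-x\rangle$ for all $x'\in B(\bar{x},\varepsilon)$. By the subdifferentially-continuous form of the characterization of variational $s$-convexity (Proposition~\ref{theo:svarchar} and the remark following it) with $s=0$, this means $f$ is variationally convex at $\bar{x}$ for $\nabla f(\bar{x})$, and the first assertion applies.

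There is essentially no serious obstacle here: the whole content is packaged in Theorem~\ref{theo:varcharquad}. The only point requiring a little care is the reduction in the $\mathcal{C}^1$ case, namely choosing $\varepsilon$ small enough that the $f$-attentive $\varepsilon$-localization of $\partial f$ around $(\bar{x},\nabla f(\bar{x}))$ reduces to an ordinary neighborhood in $\gph\partial f$ sitting over points of $U$, where the gradient inequality of convexity is directly available; once this is in place the estimate \eqref{eq:svarconv} holds verbatim with $s=0$.
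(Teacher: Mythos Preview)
Your proof is correct and, for the main assertion, actually more direct than the paper's. The paper does not apply Theorem~\ref{theo:varcharquad} at $s=0$; instead it shifts to $g:=f+\|\cdot\|^2$, observes that $g$ is variationally $2$-strongly convex at $\bar{x}$ for $\bar{v}+2\bar{x}$, applies Theorem~\ref{theo:varcharquad} to $g$ with $s=2$ to obtain $q(w)\ge\|w\|^2$ for all $q\in\qua g(\bar{x}|\bar{v}+2\bar{x})$, and then invokes the sum rule of Proposition~\ref{prop:sumrulequad} to translate this back to $\qua f(\bar{x}|\bar{v})$. Your route bypasses the shift and the sum rule entirely, since Theorem~\ref{theo:varcharquad} is stated for arbitrary $s\in\R$ and its implication \textbf{(i)}$\Longrightarrow$\textbf{(ii)} applies verbatim at $s=0$; the paper's detour is essentially a repetition of the reduction already performed inside the proof of Theorem~\ref{theo:varcharquad} itself. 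For the $\mathcal{C}^1$ case, the paper simply asserts that variational convexity reduces to local convexity for $\mathcal{C}^1$-smooth functions, while you supply a short verification of this via Proposition~\ref{theo:svarchar}; either treatment is adequate.
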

\begin{proof} Assuming that $f$ is variationally convex at $\bar{x}$ for $\bar{v}$ gives us by definition that the function $g:= f + \|\cdot\|^2$ is variationally $2$-strongly convex at $\bar{x}$ for $\bar{v} + 2\bar{x}$. In this case, $f$ is prox-regular at $\bar{x}$ for $\bar{v}$, which entails the prox-regularity of $g$ at $\bar{x}$ for $\bar{v} + 2\bar{x}$. Theorem~\ref{theo:varcharquad} tells us that the quadratic bundle $\qua g(\bar{x}|\bar{v} + 2\bar{x})$ is uniformly positive-definite with modulus $1$, i.e.,
\begin{equation}\label{eq:varconvness1}
q(w) \ge \|w\|^2\;\mbox{ for all }\;q \in \qua g(\bar{x}|\bar{v}),\quad  w \in \R^n,
\end{equation}
It follows from the sum rule in Proposition~\ref{prop:sumrulequad} that
\begin{equation}\label{eq:varconvness2}
\qua g(\bar{x}|\bar{v} + 2\bar{x}) = \|\cdot\|^2 +  \qua f(\bar{x}|\bar{v}). 
\end{equation}
Combining \eqref{eq:varconvness1} and \eqref{eq:varconvness2} yields 
$$ 
q(w) + \|w\|^2 \ge \|w\|^2\;\mbox{ whenever }\;q \in \qua f(\bar{x}|\bar{v}),\quad  w \in \R^n, 
$$
and thus verifies the claimed necessary condition. If $f$ is $\mathcal{C}^1$-smooth around $\bar{x}$, then the variational convexity of $f$ at $\bar{x}$ for $\nabla f(\bar{x})$ reduces to the local convexity of $f$ around $\bar{x}$, which completes the proof.
\end{proof}

\section{Tilt Stability of Local Minimizers}\label{sec:tilt}

This section is devoted to another important topic of variational analysis and optimization, which---in contrast of variational stability---directly addresses local minimizers of functions while happens to be closely related to the {\em strong} version of variational convexity at stationary points. We are talking about {\em tilt stability} of local minimizers, the notion introduced by Poliquin and Rockafellar \cite{tilt} as follows.

\begin{Definition}\label{tilt}\rm Given a function $f: \R^n \to \overline{\R}$, it is said that $\bar{x}\in\dom f$ is a \textit{tilt-stable local minimizer} of $f$ with modulus $\kappa > 0$ if there exists $\delta > 0$ such that the mapping
$$
M \colon v \mapsto \argmin\limits_{\|x - \bar{x}\| < \delta}\big\{ f(x) - f(\bar{x}) - \la v, x - \bar{x} \ra\big\} 
$$
is single-valued and Lipschitz continuous with modulus $\kappa$ on some neighborhood of $v = 0$ with $M(0) = \bar{x}$.
\end{Definition}

The first pointbased characterization of tilt stable local minimizers for prox-regular and subdifferentially continuous functions in finite dimensions was established in \cite{tilt}, without modulus involved, via the positive-definiteness of the {\em second-order subdifferential} mapping in the sense of \cite{mord-2nd}. Another approach to such {\em dual} characterizations of tilt stability in finite and infinite dimensions was developed in \cite{mnghia,tiltnghia}, where neighborhood characterizations were also derived in Hilbert spaces with modulus formulas; see also \cite{dima}. Explicit dual characterizations of tilt-stable minimizers in (constrained) problems of nonlinear and conic programming were obtained in \cite{gfrerer,mor15,tiltrocka}, etc. We refer the reader to the book \cite{Mordukhovich24} and the bibliographies therein for various results in this direction based on second-order subdifferentials of prox-regular and subdifferentially continuous functions and their calculations in finite and infinite dimensions. Quite recently \cite{gfrerer2024}, the subdifferential continuity was dismissed from both neighborhood and pointbased characterizations of tilt stable minimizers in the extended-real-valued unconstrained format of Definition~\ref{tilt} by incorporating the $f$-attentive convergence in the second-order subdifferential construction. 

{\em Primal-dual neighborhood} characterizations  of tilt stable minimizers for prox-regular and subdifferentially continuous functions on $\R^n$ were first obtained in \cite{tiltgraphical} in terms of {\em subgradient graphical derivatives}. It was later shown in \cite{nghia.24} that, in the case of {\em twice epi-differentiable} functions, the obtained characterizations can be equivalently expressed via the {\em second-order subderivatives} from \cite{2sd2,Rockafellar98}.\vspace*{0.05in}

Now we establish novel {\em primal-dual pointbased} characterizations of tilt-stable minimizers of general prox-regular functions {\em without subdifferential regularity} in terms of {\em quadratic bundles}. It is shown in \cite[Proposition~2.9]{varsuff1} that, for prox-regular and subdifferentially continuous functions $f\colon\R^n\to\oR$, the tilt-stability of a local minimizer $\ox$ of $f$ is {\em equivalent} to the variational strong stability of $f$ at $\ox$ for $\ov=0\in\partial f(\ox)$, while Example~3.4 from \cite{lewis} demonstrates that the imposed subdifferential continuity assumption on $f$ at its stationary points $\ox$ is {\em essential} for the fulfillment of this equivalence. 

On the other hand, it follows the dual tilt stability characterizations in \cite[Theorem~5.1]{gfrerer2024} that including the $f$-attentive convergence in the modified definition of the second-order subdifferentials provides the same characterizations of tilt-stable minimizers of prox-regular functions without subdifferential continuity at stationary points as those for variational strong convexity. Since the construction of  quadratic bundles in Definition~\ref{defi:quadbund} incorporates the $f$-attentive convergence, we can obtain the desired primal-dual characterizations of tilt stability for prox-regular functions in the absence of subdifferential continuity.

\begin{Theorem}\label{cor:tiltquad} Let $f: \R^n \to \overline{\R}$ be a proper function that is prox-regular function at $\bar{x} \in \dom f$ for $0\in \partial f(\bar{x})$. Consider the following assertions:
\begin{itemize}
\item[\bf\textbf{(i)}] $\bar{x}$ is a tilt-stable local minimizer of $f$ with modulus $\kappa > 0$.

\item[\bf\textbf{(ii)}] The quadratic bundle $\qua f(\bar{x}|0)$ satisfies the uniform eigenvalue lower-bound with $\mu > 0$ as in \eqref{quad_posi},  in which case we say that $\qua f(\bar{x}|0)$  is uniformly positive-definite with modulus $\mu > 0$.
\end{itemize} 
Then we have {\rm\textbf{(i)}} $\Longrightarrow${\rm\textbf{(ii)}} with $\mu = \dfrac{1}{2\kappa}$, while {\rm\textbf{(ii)}}$\Longrightarrow${\rm \textbf{(i)}} holds with any $\mu > \dfrac{1}{2\kappa}$.
\end{Theorem}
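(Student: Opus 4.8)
The plan is to obtain Theorem~\ref{cor:tiltquad} as a consequence of the variational $s$-convexity characterization in Theorem~\ref{theo:varcharquad}, after first translating tilt stability into strong variational convexity at the stationary point. Concretely, I would invoke the equivalence: for a function $f$ that is prox-regular at $\bar{x}$ for $0\in\partial f(\bar{x})$ (without subdifferential continuity), the point $\bar{x}$ is a tilt-stable local minimizer of $f$ with modulus $\kappa>0$ if and only if $f$ is variationally $(1/\kappa)$-convex at $\bar{x}$ for $0$. In the subdifferentially continuous setting this is \cite[Proposition~2.9]{varsuff1}; that the hypothesis cannot simply be dropped in the naive form is Example~3.4 of \cite{lewis}, but the removal of subdifferential continuity together with the exact modulus correspondence $\kappa\leftrightarrow 1/\kappa$ is available through the $f$-attentive dual characterizations of tilt stability in \cite[Theorem~5.1]{gfrerer2024}, which on one side characterize tilt-stable minimizers of modulus $\kappa$ and on the other characterize variational $s$-convexity with $s=1/\kappa$, the two being linked by a common second-order condition. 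I would also record that variational $(1/\kappa)$-convexity at $\bar{x}$ for $0$ already forces $\bar{x}$ to be a strict local minimizer: choosing $(x,v)=(\bar{x},0)\in\gph T_\varepsilon$ in \eqref{eq:svarconv} yields $f(x')\ge f(\bar{x})+\frac{1}{2\kappa}\|x'-\bar{x}\|^2$ on a neighborhood of $\bar{x}$, so the $\argmin$ appearing in Definition~\ref{tilt} is well posed.

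With this reduction in place the two implications become routine substitutions of moduli into Theorem~\ref{theo:varcharquad}. For {\rm\textbf{(i)}}$\Longrightarrow${\rm\textbf{(ii)}}: if $\bar{x}$ is tilt-stable with modulus $\kappa$, then $f$ is variationally $s$-convex at $\bar{x}$ for $0$ with $s=1/\kappa>0$, and the implication {\rm\textbf{(i)}}$\Longrightarrow${\rm\textbf{(ii)}} of Theorem~\ref{theo:varcharquad} gives that $\qua f(\bar{x}|0)$ is uniformly positive-definite with modulus $\mu=s/2=\frac{1}{2\kappa}$. For {\rm\textbf{(ii)}}$\Longrightarrow${\rm\textbf{(i)}}: if $\qua f(\bar{x}|0)$ is uniformly positive-definite with modulus $\mu>\frac{1}{2\kappa}$, set $s:=1/\kappa$, so that $\mu>s/2$; the implication {\rm\textbf{(ii)}}$\Longrightarrow${\rm\textbf{(i)}} of Theorem~\ref{theo:varcharquad} then yields that $f$ is variationally $(1/\kappa)$-convex at $\bar{x}$ for $0$, and the equivalence recalled above returns that $\bar{x}$ is a tilt-stable local minimizer of $f$ with modulus $\kappa$.

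The one place that genuinely needs care — and hence the main obstacle — is justifying the tilt-stability $\Longleftrightarrow$ variational-strong-convexity equivalence with the precise modulus matching $\kappa=1/s$ in the absence of subdifferential continuity, since the classical statement in \cite{varsuff1} (and the obstruction in \cite{lewis}) genuinely rely on that hypothesis. I would handle this by chaining the two $f$-attentive second-order characterizations of \cite{gfrerer2024} through their common pointbased condition, which both incorporate $f$-attentive convergence exactly as the quadratic bundle of Definition~\ref{defi:quadbund} does; a sanity check on $f(x)=\frac{a}{2}\|x\|^2$ (tilt modulus $1/a$, $a$-strong convexity, $\qua f(0|0)=\{\frac{a}{2}\|\cdot\|^2\}$ by Proposition~\ref{prop:sumrulequad}) confirms the bookkeeping $\mu=a/2=\frac{1}{2\kappa}$. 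Beyond this reduction, no new estimates are required, since all the analytic work has already been carried out in Theorem~\ref{theo:varcharquad}.
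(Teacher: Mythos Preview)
Your proposal is correct and follows essentially the same route as the paper's own proof: reduce tilt stability with modulus $\kappa$ to variational $(1/\kappa)$-convexity at $\bar{x}$ for $0$ via the $f$-attentive characterizations of \cite{gfrerer2024} (the paper invokes \cite[Corollary~5.2]{gfrerer2024} rather than Theorem~5.1, but this is the same reduction), and then apply Theorem~\ref{theo:varcharquad} with $s=1/\kappa$ to read off $\mu=s/2=\tfrac{1}{2\kappa}$ in one direction and $\mu>\tfrac{1}{2\kappa}$ in the other. The extra remarks you include (strict minimality from \eqref{eq:svarconv}, the quadratic sanity check) are harmless embellishments but not needed for the argument.
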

\begin{proof}
It follows from \cite[Corollary~5.2]{gfrerer2024} that for any function that is l.s.c.\ at $\ox\in\dom f$ with $0\in\partial f(\ox)$ and for any $\kappa>0$, the tilt stability of $f$ at $\ox$ with modulus $\kappa^{-1}$ is equivalent to the variational $\kappa$-convexity of $f$ at $\ox$ for $\ov=0$. Applying the characterization of  variational strong convexity established in Theorem~\ref{theo:varcharquad} verifies the claimed pointbased  characterizations of tilt-stable minimizers.
\end{proof}

\section{Conclusions and Future Research}\label{sec:conclusion}

This paper employs the fresh notion of quadratic bundles for extended-real-valued functions to derive pointbased primal-dual characterizations of variational $s$-strong convexity and tilt stability of local minimizers for the general class of prox-regular functions without the subdifferential continuity requirement. The major role in our device is played establishing novel relationships between the generalized variational properties in questions for the broad classes of extended-real-valued functions under consideration and the corresponding classical properties and constructions for the associated Moreau envelopes.

Topics of our future research include the study and characterizations via generalized bundles of the plain (not strong) property of variational convexity ($s=0$) for functions that may not be prox-regular and thus the variational $s$-convexity for them does not reduce to the case where $s>0$. We also aim at investigating infinite-dimensional extensions  of the obtained and newly proposed results.

\end{document}